\providecommand{\U}[1]{\protect\rule{.1in}{.1in}}
\DeclareSymbolFont{CM}{OMX}{cmex}{m}{n}
\DeclareMathSymbol{\sumop}{\mathop}{CM}{"50}
\renewcommand{\sum}{\sumop}
\newtheorem{theorem}{\sc Theorem}[section]
\newtheorem{lemma}[theorem]{\sc Lemma}
\newtheorem{proposition}[theorem]{\sc Proposition}
\theoremstyle{definition}
\theoremstyle{remark}
\newtheorem{remark}[theorem]{Remark}
\numberwithin{equation}{section}
\begin{document}

\title{On quasilinear elliptic problems with finite or infinite potential wells}
\author{Shibo Liu}
\dedicatory{Department of Mathematics, Xiamen University\\Xiamen 361005, China}
\thanks{This work was supported by NSFC (11671331, 11971436) and a special fund from Ministry of Education of China (20180707). It was completed while the author was visiting the Abdus Salam
International Centre for Theoretical Physics (ICTP), he would like to thank ICTP for the hospitality.}
\maketitle

\begin{abstract}
We consider quasilinear elliptic problems of the form
\[
-\operatorname{div}\big(\phi(|\nabla u|)\nabla u\big)+V(x)\phi
(|u|)u=f(u)\qquad u\in W^{1,\Phi}(\mathbb{R}^{N}),
\]
where $\phi$ and $f$ satisfy suitable conditions. The positive potential $V\in
C(\mathbb{R}^{N})$ exhibits a finite or infinite potential well in the sense
that $V(x)$ tends to its supremum $V_{\infty}\le+\infty$ as $|x|\to\infty$.
Nontrivial solutions are obtained by variational methods. When $V_{\infty
}=+\infty$, a compact embedding from a suitable subspace of $W^{1,\Phi
}(\mathbb{R}^{N})$ into $L^{\Phi}(\mathbb{R}^{N})$ is established, which
enables us to get infinitely many solutions for the case that $f$ is odd.
For the case that $V(x)=\lambda a(x) + 1$ exhibits a steep potential well
controlled by a positive parameter $\lambda$, we get nontrivial solutions for
large $\lambda$.

\end{abstract}

\section{Introduction}

In this paper we consider the following quasilinear elliptic problem in
$\mathbb{R}^{N}$,%
\begin{equation}
-\operatorname{div}(\phi(|\nabla u|)\nabla u)+V(x)\phi(\left\vert u\right\vert
)u=f(u)\text{,\qquad}u\in W^{1,\Phi}(\mathbb{R}^{N})\text{.}\label{e1}%
\end{equation}
where $\phi:[0,\infty)\rightarrow\lbrack0,\infty)$ is a $C^{1}$-function
satisfying the following assumptions:

\begin{itemize}
\item[$(\phi_{1})$] the function $t\mapsto\phi(t)t$ is increasing in
$(0,\infty)$,

\item[$(\phi_{2})$] there exist $\ell,m\in(1,N)$ such that%
\begin{equation}
\ell\leq\frac{\phi(\left\vert t\right\vert )t^{2}}{\Phi(t)}\leq m\text{\qquad
for all }t\neq0\text{,}\label{e0}%
\end{equation}
where $\ell\leq m<\ell^{\ast}$ (note that for $p\in(1,N)$ we set $p^{\ast}=Np/(N-p)$),
\[
\Phi(t)=\int_{0}^{\left\vert t\right\vert }\phi(s)s\,\mathrm{d}s\text{.}%
\]

\end{itemize}

Nonlinear elliptic problems in $\mathbb{R}^{N}$ like (\ref{e1}) have been
extensively studied. For example, if $\phi(t)\equiv1$, then the problem
(\ref{e1}) reduces to the following stationary Schr\"{o}dinger equation%
\begin{equation}
-\Delta u+V(x)u=f(u)\text{,\qquad}u\in H^{1}(\mathbb{R}^{N})\text{,}%
\label{esh}%
\end{equation}
which is a central topic in nonlinear analysis in the last decads, see
\cite{MR1181725,MR2557725,MR1751952,MR2271695,MR2957647,MR1162728} and the
reference therein. If $\phi(t)=t^{p-2}$, then the leading term in (\ref{e1})
is the $p$-Laplacian operator $-\Delta_{p}$ and the corresponding problem has
also been studied in many papers such as
\cite{MR2794422,MR3008324,MR3454625,MR3619241}. If $\phi(t)=t^{p-2}+t^{q-2}$,
the leading term in (\ref{e1}) is the so-called $\left(  p,q\right)
$-Laplacian operator and results for the corresponding problems can be found
in \cite{MR2524439,MR3300789,MR3462564}.

For general $\phi$ satisfying $(\phi_{1})$ and $(\phi_{2})$, $-\Delta_{\Phi
}u:=-\operatorname{div}(\phi(\left\vert \nabla u\right\vert )\nabla u)$ is
called the $\Phi$-Laplacian of $u$. The $\Phi$-Laplacian operator
$-\Delta_{\Phi}$ arises in some applications such as nonlinear elasticity,
plasticity and non-Newtonian fluids. Elliptic boundary value problems
involving the $\Phi$-Laplacian have been studied on a bounded domain
$\Omega\subset\mathbb{R}^{N}$ in several recent papers, such as Cl\'{e}ment \emph{et
al} \cite{MR1777463}, Fukagai-Narukawa \cite{MR2317653} and Carvalho \emph{et al}
\cite{MR3306384}.

For unbounded domains such as $\mathbb{R}^{N}$, there are also some recent
results on the quasilinear $\Phi$-Laplacian problem (\ref{e1}). In Alves \emph{et al}
\cite{MR3328350}, the authors studied the problem (\ref{e1}) by variational
methods under the following conditions on the potential $V$ and the
nonlinearity $f$.

\begin{itemize}
\item[$(V_{0})$] $V\in C(\mathbb{R}^{N})$, $V_{0}=\inf_{\mathbb{R}^{N}}V>0$.

\item[$(f_{1})$] $f\in C(\mathbb{R})$ satisfies
\begin{equation}
\lim_{\left\vert t\right\vert \rightarrow0}\frac{f(t)}{\phi(\left\vert
t\right\vert )t}=\lambda_{0}\text{,\qquad}\lim_{\left\vert t\right\vert
\rightarrow\infty}\frac{f(t)}{\phi_{\ast}(\left\vert t\right\vert
)t}=0\text{,}%
\label{elm}
\end{equation}
where $\phi_{\ast}$ is related to $\Phi_{\ast}$, the Sobolev conjugate
function of $\Phi$ (see (\ref{esc})), via
\[
\Phi_{\ast}(t)=\int_{0}^{\left\vert t\right\vert }\phi_{\ast}(s)s\,\mathrm{d}%
s\text{.}%
\]

\item[$(f_{2})$] there exists $\theta>m$ such that for all $t\neq0$,%
\[
0<F(t):=\int_{0}^{t}f(s)\,ds\leq\frac{1}{\theta}tf(t)\text{.}%
\]
\end{itemize}
Because the problem (\ref{e1}) is settled on the unbounded domain
$\mathbb{R}^{N}$, to overcome the lack of compactness of the relevant Sobolev
embeddings, the authors considered the cases that $V$ is radial, or
$\mathbb{Z}^{N}$-periodic. Using a Strauss type result and a Lions type
concentration lemma in Orlicz-Sobolev spaces established in the paper, they
obtained nontrivial solutions for the problem via the mountain pass theorem
\cite{MR0370183}.

For the autonomous case that $V(x)\equiv0$ and $f(u)=\left\vert u\right\vert
^{s-2}-\left\vert u\right\vert ^{\alpha-2}$, nontrivial solutions for
(\ref{e1}) have also been obtained in \cite{MR3334963,MR3148112} via mountain
pass theorem, thanks to the compact embeddings from the radial Orlicz-Sobolev
spaces to certain Lebesgue spaces $L^{\tau}(\mathbb{R}^{N})$ established in
these papers. The main difference of these two papers is on the assumptions on
$\phi$.

In \cite{MR3513973}, Chorfi and R\v{a}dulescu studied the following problem%
\begin{equation}
-\operatorname{div}(\phi(\left\vert \nabla u\right\vert )\nabla
u)+a(x)\left\vert u\right\vert ^{\alpha-2}u=f(x,u)\text{\qquad in }%
\mathbb{R}^{N}\text{,}\label{ecr}%
\end{equation}
where the function $\phi$ is the same as in \cite{MR3148112} and $a$ verifies%
\begin{equation}
\lim_{\left\vert x\right\vert \rightarrow0}a(x)=+\infty\text{,\qquad}%
\lim_{\left\vert x\right\vert \rightarrow+\infty}a(x)=+\infty\text{.}%
\label{ea}%
\end{equation}
Note that the zero order term on the left hand side of (\ref{ecr}) is a power function of $u$, which is different to that of (\ref{e1}). Using the strategy initiated by Rabinowitz \cite{MR1162728}, condition
(\ref{ea}) enables the authors to overcome the lack of compactness and obtain a
nontrivial solution for the problem (\ref{ecr}).

There are also some papers for the case that there is a parameter
$\varepsilon>0$ in (\ref{e1}), existence and multiplicity of solutions for the
equation were obtained for $\varepsilon$ small, see
\cite{MR3905639,MR3688036,MR3694761}.

Our results are closely related to those of Alves \emph{et al} \cite{MR3328350}. As
mentioned before, in their paper they studied the case that the potential $V$
is radial or $\mathbb{Z}^{N}$-periodic. In our first result we consider the
case that $V$ satisfies the following condition due to Bartsch and Wang
\cite{MR1349229} in their study of (\ref{esh}).

\begin{itemize}
\item[$(V_{1})$] for all $M>0$, $\mu(V^{-1}(-\infty,M])<\infty$.
\end{itemize}
Here $\mu$ is
the Lebesgue measure on $\mathbb{R}^{N}$.
Note that $\left(  V_{1}\right)  $ is satisfied if $V$ is coercive:%
\begin{equation}
\lim_{\left\vert x\right\vert \rightarrow\infty}V(x)=+\infty\text{.}\label{ev}%
\end{equation}

To apply variational methods let $X$ be a suitable subspace of the
Orlicz-Sobolev space $W^{1,\Phi}(\mathbb{R}^{N})$ that will be made clear in
Section 2, and consider the $C^{1}$-functional $\mathcal{J}:X\rightarrow
\mathbb{R}$ given by
\[
\mathcal{J}(u)=\int_{\mathbb{R}^{N}}\Phi(|\nabla u|)+\int_{\mathbb{R}^{N}%
}V(x)\Phi(\left\vert u\right\vert )-\int_{\mathbb{R}^{N}}F(u)\text{.}%
\]
Then, solutions of (\ref{e1}) are critical points of $\mathcal{J}$.

\begin{theorem}
\label{t1}Suppose $\phi$ satisfies $(\phi_{1})$, $(\phi_{2})$; $V$ satisfies
$(V_{0})$, $(V_{1})$; $f$ satisfies $(f_{1})$, $(f_{2})$.

\begin{itemize}
\item[$\left(  1\right)  $] If $\lambda_{0}=0$, then \eqref{e1} has a
nontrivial solution.

\item[$\left(  2\right)  $] If $\lambda_{0}\geq0$ and $f$ is odd, then
\eqref{e1} has a sequence of solutions $\left\{  u_{n}\right\}  $ such that
$\mathcal{J}(u_{n})\rightarrow+\infty$.
\end{itemize}
\end{theorem}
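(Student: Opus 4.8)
The plan is to apply variational methods to the functional $\mathcal{J}$ on the subspace $X\subset W^{1,\Phi}(\mathbb{R}^{N})$, using the compact embedding $X\hookrightarrow L^{\Phi}(\mathbb{R}^{N})$ (and into the relevant lower-order Orlicz spaces) that condition $(V_1)$ provides. First I would record the structural facts: under $(\phi_1)$, $(\phi_2)$ the modular $u\mapsto\int_{\mathbb{R}^N}\Phi(|\nabla u|)+\int_{\mathbb{R}^N}V(x)\Phi(|u|)$ is comparable to a power of the norm on $X$ from above and below (the $\Delta_2$-condition and the $\ell$–$m$ bounds give $\min\{\|u\|^{\ell},\|u\|^{m}\}\le\text{modular}\le\max\{\|u\|^{\ell},\|u\|^{m}\}$), so $\mathcal{J}$ is well defined and $C^1$ with $\langle\mathcal{J}'(u),v\rangle=\int\phi(|\nabla u|)\nabla u\cdot\nabla v+\int V(x)\phi(|u|)uv-\int f(u)v$; hence critical points solve \eqref{e1}. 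From $(f_1)$ and $(f_2)$ one gets the growth bounds $|f(t)|\le\varepsilon\phi(|t|)|t|+C_\varepsilon\phi_\ast(|t|)|t|$ and $F(t)\le\varepsilon\Phi(|t|)+C_\varepsilon\Phi_\ast(|t|)$, together with the superquadraticity $F(t)\le\frac1\theta tf(t)$ with $\theta>m$, and when $\lambda_0=0$ the first limit in \eqref{elm} upgrades the $\varepsilon\Phi(|t|)$ term to $o(\Phi(|t|))$ near $0$.

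For part (1) I would verify the mountain pass geometry of $\mathcal{J}$. Near the origin: since $\lambda_0=0$, for any $\varepsilon>0$, $\int F(u)\le\varepsilon\int\Phi(|u|)+C_\varepsilon\int\Phi_\ast(|u|)$; using the embeddings $X\hookrightarrow L^{\Phi}$ and $W^{1,\Phi}\hookrightarrow L^{\Phi_\ast}$ and the modular/norm comparison, $\mathcal{J}(u)\ge c_1\|u\|^{m}-\varepsilon c_2\|u\|^{\ell}-C_\varepsilon c_3\|u\|^{\ell^\ast_\Phi}$; choosing $\varepsilon$ small and $\|u\|=\rho$ small gives $\mathcal{J}(u)\ge\alpha>0$ on $\|u\|=\rho$ (here one uses $\ell\le m<\ell^\ast$ so the positive term dominates). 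For the far point: fix $u_0\ne0$; $(f_2)$ implies $F(tu_0)\ge c t^{\theta}$ for $t$ large on the set where $u_0\ne0$, while the modular grows at most like $t^{m}$, and since $\theta>m$ we get $\mathcal{J}(tu_0)\to-\infty$. Then I would check the Palais–Smale condition: a $(PS)_c$ sequence $\{u_n\}$ is bounded because $\mathcal{J}(u_n)-\frac1\theta\langle\mathcal{J}'(u_n),u_n\rangle\ge(1-\tfrac m\theta)\cdot(\text{modular of }u_n)-o(\|u_n\|)$ by $(f_2)$ and the upper $\ell$–$m$ bound, forcing $\|u_n\|$ bounded; passing to a subsequence $u_n\rightharpoonup u$ in $X$, the compact embedding gives $u_n\to u$ strongly in $L^{\Phi}$ and in the Orlicz space controlling $f$, so $\int f(u_n)(u_n-u)\to0$; feeding this into $\langle\mathcal{J}'(u_n)-\mathcal{J}'(u),u_n-u\rangle\to0$ and using the $(S_+)$-type monotonicity of the $\Phi$-Laplacian operator (a standard consequence of $(\phi_1)$) yields $u_n\to u$ in $X$. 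The mountain pass theorem then produces a critical point at the level $c\ge\alpha>0$, which is therefore nontrivial.

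For part (2), with $f$ odd, $\mathcal{J}$ is even, and I would invoke the symmetric mountain pass theorem (the $\mathbb{Z}_2$-version, e.g.\ Rabinowitz's theorem using the genus). The same $(PS)$ argument applies. It remains to verify the linking/geometry conditions: $\mathcal{J}(0)=0$; on any finite-dimensional subspace $E\subset X$, $\mathcal{J}\to-\infty$ uniformly as $\|u\|\to\infty$ (again from $(f_2)$: $F(u)\ge c|u|^{\theta}-C$ pointwise, all norms on $E$ are equivalent, $\theta>m$, so $\mathcal{J}(u)\le C\|u\|^m-c'\|u\|_\theta^\theta\cdot(\dots)\to-\infty$); and there exist $\rho,\alpha>0$ with $\mathcal{J}\ge\alpha$ on $\partial B_\rho\cap Y$ for the relevant subspaces $Y$ of finite codimension — but when $\lambda_0=0$ the estimate in part (1) already gives $\mathcal{J}\ge\alpha$ on all of $\partial B_\rho$. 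When $\lambda_0>0$ the origin is no longer a strict local minimum of $\mathcal{J}$, so here one uses the eigenvalue-type structure: if $\lambda_0$ is small relative to the bottom of the spectrum of $-\Delta_\Phi+V\phi(|\cdot|)\cdot$ on $X$ the positivity near $0$ survives, and if it is large one passes to subspaces $Y$ of large finite codimension where the quadratic-type form is coercive enough to absorb $\lambda_0\int\Phi(|u|)$ — this is where the compact embedding is essential, since it makes the relevant infimum of the Rayleigh-type quotient over $\|u\|=1$, $u\in Y$, tend to $+\infty$ as $\operatorname{codim}Y\to\infty$. The main obstacle I anticipate is precisely this last point: establishing the uniform $\mathcal{J}\ge\alpha>0$ on $\partial B_\rho\cap Y$ for all large-codimension $Y$ in the genuinely nonlinear Orlicz setting (where "eigenvalues" of $-\Delta_\Phi$ must be replaced by a Ljusternik–Schnirelmann min-max sequence $\lambda_k\to\infty$ built from the compact embedding), and then matching the index of this pair against the index of the sphere $\partial B_\rho$ to run the genus-based deformation argument and extract $\mathcal{J}(u_n)\to+\infty$.
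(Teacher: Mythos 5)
Your treatment of part (1) and of the Palais--Smale condition follows the paper's line closely: mountain-pass geometry, boundedness of $(PS)_c$ sequences from the $(f_2)$--$(\phi_2)$ interplay, strong $L^{\Phi}$-convergence from the compact embedding, and then an $(S_+)$/monotone-operator step to upgrade weak convergence in $X$ to strong convergence. (The paper packages the last step via pseudomonotonicity of $\mathcal{A}$, \cite[Lemma 2.98]{MR2267795}, but the content is the same $(S_+)$ argument you describe. Also note that for part (1) alone the paper takes a shortcut: assuming $u=0$, it feeds $\int\Phi(|u_n|)\to0$ into $\langle\mathcal{J}'(u_n),u_n\rangle\to0$ to contradict $\mathcal{J}(u_n)\to c>0$, without first proving full $(PS)$.) One small imprecision in your mountain-pass estimate near $0$: as written, $c_1\|u\|^m-\varepsilon c_2\|u\|^{\ell}-C_\varepsilon c_3\|u\|^{\ell^\ast}$ is not positive for $\rho$ small because $\ell\le m$ makes $\|u\|^{\ell}$ the dominant small-$\rho$ term. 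The fix, implicit in the paper's reference to \cite[Lemma 4.1]{MR3328350}, is to absorb $\varepsilon\int\Phi(|u|)$ into $\int V\Phi(|u|)$ using $V\ge V_0>0$ and $\varepsilon<V_0$; the remaining competition is only $\|u\|^m$ versus $\|u\|^{\ell^\ast}$, settled by $m<\ell^\ast$.

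The genuine gap is exactly where you flag it, in the geometry on $\partial B_\rho\cap Z$ for part (2) when $\lambda_0>0$, but the remedy you propose does not work in this setting. There is no linear spectral decomposition for $-\Delta_{\Phi}+V\phi(|\cdot|)\cdot$, so the ``Rayleigh quotient over codimension-$k$ subspaces tends to $+\infty$'' picture that one uses for $-\Delta+V$ has no clean counterpart here; trying to build a Ljusternik--Schnirelmann min-max spectrum for $-\Delta_\Phi$ and then tie it to the linking geometry would require substantial additional machinery. The paper sidesteps this entirely: it proves (Lemma~\ref{l6}) that $\mathcal{F}(u)=\int_{\mathbb{R}^N}F(u)$ is weakly-strongly continuous (using the compact embedding $X\hookrightarrow L^{\Phi}(\mathbb{R}^N)$ plus the $(f_1)$ growth bound, \emph{without} needing $\lambda_0=0$), and then invokes a Fan--Han-type lemma (\cite[Lemma 3.3]{MR2092084}): for the tail spaces $Z_k=\overline{\operatorname{span}}\{e_i:i\ge k\}$,
\[
\beta_k=\sup_{u\in\partial B_1\cap Z_k}|\mathcal{F}(u)|\to0\quad\text{as }k\to\infty\text{.}
\]
Since the modular $\int\Phi(|\nabla u|)+\int V\Phi(|u|)$ is bounded below by a fixed $c>0$ on $\partial B_1$ (via Lemma~\ref{l5}), choosing $k$ with $\beta_k<c$ gives $\mathcal{J}\ge c-\beta_k>0$ on $\partial B_1\cap Z_k$, and one applies the symmetric mountain pass theorem with $Y=Y_k$, $Z=Z_k$ at the fixed radius $\rho=1$. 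This swallows the $\lambda_0\Phi(|t|)$ part of $F$ wholesale, with no eigenvalue analysis, and it is the key device you would need to supply to make part (2) rigorous.

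Your verification of the anti-coercivity on finite-dimensional subspaces is essentially the paper's Fatou-lemma argument using $F(t)/|t|^m\to+\infty$; that part is fine.
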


Motivated by Bartsch and Wang \cite{MR1349229}, the assumption $\left(
V_{1}\right)  $ enables us to establish a compact embedding result from our
working space $X$ into  subcritical Orlicz spaces, see Lemma \ref{l0}. With
this result we can regain compactness for our functional $\mathcal{J}$ and
get critical points.

As a special case of $\left(  V_{1}\right)  $, (\ref{ev}) can be interpreted
as $V$ has an infinite potential well. In our next result we inverstigate the
case that $V$ exhibits a finite potential well:

\begin{itemize}
\item[$(V_{2})$] for all $x\in\mathbb{R}^{N}$, $
V(x)<V_{\infty}:=\lim\limits_{\left\vert x\right\vert \rightarrow\infty}%
V(x)<\infty$.
\end{itemize}
Under the assumption $\left(  V_{2}\right)  $ the above compact embedding is
not valid anymore. Hence, to get critical points of $\mathcal{J}$, we need the
following monotonicity assumptions on $\phi$ and $f$:

\begin{itemize}
\item[$(\phi_{3}^{s})$] for some $s\geq2$, the function $t\mapsto
\phi(t)/t^{s-2}$ is nonincreasing on $(0,\infty)$,
\item[$(f_{3}^{s})$] for some $s\geq2$, the function $t\mapsto f(t)/\left\vert
t\right\vert ^{s-1}$ is strictly increasing on $(0,\infty)$ and $(-\infty,0)$.
\end{itemize}
Note that $(f_{3}^{s})$ implies that for all $\xi\in\mathbb{R}\backslash
\{0\}$, $t\mapsto f(t\xi)/t^{s-1}$ is strictly increasing on $(0,\infty)$. Our
result reads as follows.

\begin{theorem}
\label{t2}Suppose $\phi$ satisfies $(\phi_{1})$, $(\phi_{2})$ and $(\phi
_{3}^{s})$; $V$ satisfies $(V_{0})$, $(V_{2})$; $f$ satisfies $(f_{1})$ with
$\lambda_{0}=0$, $(f_{2})$ and $(f_{3}^{s})$. Then \eqref{e1} has a nontrivial solution.
\end{theorem}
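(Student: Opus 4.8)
The plan is to combine the Nehari manifold method with a comparison between $\mathcal{J}$ and the functional of the associated ``problem at infinity''. Since $(V_{0})$ and $(V_{2})$ give $0<V_{0}\le V(x)<V_{\infty}<\infty$, here $X=W^{1,\Phi}(\mathbb{R}^{N})$ carries a norm equivalent to the usual one and $\mathcal{J}$ is $C^{1}$ on $X$. First I would analyze the fibering maps $t\mapsto\mathcal{J}(tu)$: the monotonicity hypotheses $(\phi_{3}^{s})$ and $(f_{3}^{s})$ are tailored so that $t\mapsto t^{-s}\langle\mathcal{J}'(tu),tu\rangle$ is the difference of a nonincreasing function (from $r\mapsto\phi(r)/r^{s-2}$ evaluated at $t|\nabla u|$ and $t|u|$) and a strictly increasing one (from $r\mapsto f(r)/|r|^{s-1}$), hence strictly decreasing on $(0,\infty)$; together with $(f_{1})$ for $\lambda_{0}=0$, which makes $\langle\mathcal{J}'(tu),tu\rangle>0$ for small $t>0$, and $(f_{2})$, which forces it negative for large $t$, this gives a unique $t_{u}>0$ with $t_{u}u\in\mathcal{N}:=\{u\in X\setminus\{0\}:\langle\mathcal{J}'(u),u\rangle=0\}$, and $\mathcal{J}(t_{u}u)=\max_{t>0}\mathcal{J}(tu)$. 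Estimates based on $(f_{1})$ and the Orlicz-Sobolev embedding $W^{1,\Phi}(\mathbb{R}^{N})\hookrightarrow L^{\Phi_{\ast}}(\mathbb{R}^{N})$ then show $\mathcal{N}$ is bounded away from $0$, so $c:=\inf_{\mathcal{N}}\mathcal{J}>0$, and $c$ equals the mountain pass level of $\mathcal{J}$.

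Next I would bring in the limiting functional
\[
\mathcal{J}_{\infty}(u)=\int_{\mathbb{R}^{N}}\Phi(|\nabla u|)+V_{\infty}\int_{\mathbb{R}^{N}}\Phi(|u|)-\int_{\mathbb{R}^{N}}F(u),
\]
with its Nehari manifold $\mathcal{N}_{\infty}$ and level $c_{\infty}:=\inf_{\mathcal{N}_{\infty}}\mathcal{J}_{\infty}$; the same fibering analysis applies to it. Since the limit problem is autonomous, a concentration-compactness argument (a Lions type lemma in Orlicz-Sobolev spaces as in \cite{MR3328350}, with vanishing excluded via $(f_{1})$) shows $c_{\infty}$ is attained, say by $w$. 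Because $V(x)<V_{\infty}$ everywhere and $\Phi>0$ on $(0,\infty)$, one has $\mathcal{J}(tw)<\mathcal{J}_{\infty}(tw)$ for all $t>0$; projecting $w$ onto $\mathcal{N}$ at $t=t_{w}$ gives $c\le\mathcal{J}(t_{w}w)<\mathcal{J}_{\infty}(t_{w}w)\le\max_{t>0}\mathcal{J}_{\infty}(tw)=\mathcal{J}_{\infty}(w)=c_{\infty}$. This strict inequality $c<c_{\infty}$ is what restores compactness at the level $c$.

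It remains to produce a nontrivial critical point of $\mathcal{J}$ at level $c$. Take a Palais-Smale sequence $(u_{n})$ for $\mathcal{J}$ at level $c$ (from the mountain pass theorem, or from Ekeland's principle on $\mathcal{N}$); $(\phi_{2})$ and $(f_{2})$ make it bounded, so $u_{n}\rightharpoonup u$ in $X$ along a subsequence, and a standard argument (the $(S_{+})$ property of $-\Delta_{\Phi}$ together with the compact local Orlicz embeddings, giving $\nabla u_{n}\to\nabla u$ a.e.) shows $u$ is a critical point of $\mathcal{J}$. If $u\neq0$ we are done. If $u=0$, a splitting argument shows the mass escapes to infinity: there are $y_{n}$ with $|y_{n}|\to\infty$ and $u_{n}(\cdot+y_{n})\rightharpoonup\bar{w}\neq0$ with $\mathcal{J}_{\infty}'(\bar{w})=0$, hence $\mathcal{J}_{\infty}(\bar{w})\ge c_{\infty}$; a Brezis-Lieb type decomposition of the $\Phi$-terms then yields $c=\lim\mathcal{J}(u_{n})\ge\mathcal{J}_{\infty}(\bar{w})\ge c_{\infty}$, contradicting $c<c_{\infty}$. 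Therefore $u\neq0$ is the desired nontrivial solution of \eqref{e1}, with $\mathcal{J}(u)=c$.

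I expect the main obstacle to be the compactness step: carrying out the concentration-compactness and Brezis-Lieb type decompositions in the Orlicz-Sobolev setting, where $L^{p}$ interpolation is unavailable, both to obtain the attainment of $c_{\infty}$ for the autonomous problem and to rule out the escaping-bubble alternative for $(u_{n})$. The fibering analysis on $\mathcal{N}$, the boundedness of Palais-Smale sequences, and the identification of weak limits as critical points are by comparison routine, the first resting on the monotonicity conditions $(\phi_{3}^{s})$ and $(f_{3}^{s})$.
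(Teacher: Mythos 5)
Your proposal is essentially correct and rests on the same two pillars as the paper's proof --- comparison with the autonomous functional $\mathcal{J}_{\infty}$ and the fibering-map analysis enabled by $(\phi_{3}^{s})$, $(f_{3}^{s})$ --- but you organize the contradiction differently, and in a way that forces you to carry more machinery than necessary. You prove two inequalities, $c<c_{\infty}$ and (when the weak limit vanishes) $c\geq c_{\infty}$, which requires both the attainment of $c_{\infty}$ for the limit problem and a Brezis--Lieb type splitting of the $\Phi$-modulars. The paper avoids both. It argues directly on the single $(PS)_{c}$ sequence: if $u=0$, then $\{u_{n}\}$ is also a $(PS)_{c}$ sequence for $\mathcal{J}_{\infty}$ (the potentials differ by $o(1)$ off a ball plus a compactly supported piece), the translation/concentration result of Alves et al.\ for constant (periodic) potentials produces a \emph{nonzero} critical point $v$ of $\mathcal{J}_{\infty}$ as a translated weak limit, and a Fatou argument applied to $\mathcal{J}_{\infty}-\tfrac{1}{\theta}\langle\mathcal{J}_{\infty}',\cdot\rangle$ (whose integrand is nonnegative by $(\phi_{2})$ and $(f_{2})$) gives $\mathcal{J}_{\infty}(v)\leq c$. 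The fibering analysis then shows $\mathcal{J}_{\infty}(v)=\max_{t\geq0}\mathcal{J}_{\infty}(tv)$, and since $V<V_{\infty}$ pointwise, the path $t\mapsto tTv$ lies strictly below $\mathcal{J}_{\infty}(v)\leq c$, contradicting the mountain pass characterization of $c$. No ground-state attainment and no Brezis--Lieb lemma are needed.

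Two remarks on making your route rigorous. First, the step you defer to "a Brezis--Lieb type decomposition of the $\Phi$-terms" is genuinely delicate in Orlicz--Sobolev spaces (no $L^{p}$ interpolation, the modular is not a power), but it can be replaced by the much softer Fatou/weak-lower-semicontinuity argument above applied to the translated sequence, which yields $c\geq\mathcal{J}_{\infty}(\bar{w})\geq c_{\infty}$ directly; you should make that substitution. Second, the attainment of $c_{\infty}$ (needed for your strict inequality $c<c_{\infty}$) requires identifying the mountain pass level of $\mathcal{J}_{\infty}$ with $\inf_{\mathcal{N}_{\infty}}\mathcal{J}_{\infty}$, which again uses $(\phi_{3}^{s})$, $(f_{3}^{s})$; this is fine but is exactly the point at which the paper's more economical argument shows the whole Nehari superstructure can be dispensed with.
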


For our last result, we 
consider the case that the potential $V(x)$ is of the form $\lambda a(x)+1$ with $\lambda>0$ and $a$ satisfies

\begin{itemize}
\item[$( a_{1}) $] $a\in C( \mathbb{R}^{N}) $, $a\geq0$ and $a^{-1}( 0) $ has
nonempty interior.

\item[$(a_{2})$] for some $M_{0}>0$ we have $\mu(a^{-1}(-\infty,M_{0}%
])<\infty$.
\end{itemize}
For problem (\ref{esh}), these conditions are introduced by Bartsch and Wang \cite{MR1349229}, and characterizes $V$ as possessing a steep potential well whose
height is controlled by the positive parameter $\lambda$. Our result for this
case is the following theorem.

\begin{theorem}
\label{t3}Suppose $(\phi_{1})$, $(\phi_{2})$, $(a_{1})$ and $(a_{2})$ are satisfied, $f\in C(\mathbb{R})$ satisfies $(f_{1})$ with
$\lambda_{0}=0$ and $(f_{2})$.
Then there exists $\lambda^{\ast}>0$ such that for all $\lambda\geq
\lambda^{\ast}$, the problem%
\begin{equation}
-\operatorname{div}(\phi(|\nabla u|)\nabla u)+V_{\lambda}(x)\phi(\left\vert
u\right\vert )u=f(u)\text{,\qquad}u\in W^{1,\Phi}(\mathbb{R}^{N}%
)\text{.}\label{ee}%
\end{equation}
has a nontrivial solution, here $V_{\lambda}(x)=\lambda a(x)+1$.
\end{theorem}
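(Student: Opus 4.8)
The plan is to study the mountain-pass structure of the functional

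\begin{equation*}
\mathcal{J}_{\lambda}(u)=\int_{\mathbb{R}^{N}}\Phi(|\nabla u|)+\int_{\mathbb{R}^{N}}V_{\lambda}(x)\Phi(|u|)-\int_{\mathbb{R}^{N}}F(u)
\end{equation*}
on the space $X_{\lambda}=\{u\in W^{1,\Phi}(\mathbb{R}^{N}):\int_{\mathbb{R}^{N}}a(x)\Phi(|u|)<\infty\}$ equipped with the norm associated to $\int\Phi(|\nabla u|)+\int V_{\lambda}(x)\Phi(|u|)$, uniformly in $\lambda\geq 1$. First I would record the mountain-pass geometry: since $\lambda_{0}=0$ in $(f_{1})$, the nonlinearity is superlinear at the origin relative to $\Phi$ and subcritical at infinity, so the standard estimates via $(\phi_{2})$ and the Orlicz-Sobolev embedding $W^{1,\Phi}\hookrightarrow L^{\Phi_{*}}$ give constants $\rho,\alpha>0$ with $\mathcal{J}_{\lambda}(u)\geq\alpha$ whenever $\|u\|_{\lambda}=\rho$; crucially $\alpha$ and $\rho$ can be chosen independent of $\lambda\geq 1$ because $V_{\lambda}\geq 1$, so enlarging $\lambda$ only increases the quadratic-type part. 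For the second geometric condition, $(f_{2})$ (the Ambrosetti-Rabinowitz condition with $\theta>m$) together with $(\phi_{2})$ yields $F(t)\geq c|t|^{\theta}-c'$, hence for a fixed $w\in C_{c}^{\infty}$ supported in the interior of $a^{-1}(0)$ — which is nonempty by $(a_{1})$ — we have $V_{\lambda}\equiv 1$ on $\operatorname{supp}w$, so $\mathcal{J}_{\lambda}(tw)\to-\infty$ as $t\to\infty$ along a path that does not depend on $\lambda$. This produces a mountain-pass level $c_{\lambda}$ with $\alpha\leq c_{\lambda}\leq c_{0}:=\max_{t\geq 0}\mathcal{J}_{1}(tw)$, a bound uniform in $\lambda$.

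Next I would extract a bounded Palais-Smale sequence at level $c_{\lambda}$: the Ambrosetti-Rabinowitz condition $(f_{2})$ combined with $(\phi_{2})$ gives the usual inequality $\mathcal{J}_{\lambda}(u)-\tfrac{1}{\theta}\langle\mathcal{J}_{\lambda}'(u),u\rangle\geq(\tfrac{1}{m}-\tfrac{1}{\theta})\big(\int\Phi(|\nabla u|)+\int V_{\lambda}(x)\Phi(|u|)\big)$, so any PS sequence is bounded in $X_{\lambda}$ with a bound depending only on $c_{\lambda}\leq c_{0}$ — again uniform in $\lambda$. Passing to a weak limit $u_{\lambda}$ in $X_{\lambda}$ (and hence weakly in $W^{1,\Phi}$ and a.e.), standard arguments show $u_{\lambda}$ is a critical point of $\mathcal{J}_{\lambda}$, i.e.\ a (possibly trivial) solution of \eqref{ee}.

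The heart of the matter — and the step I expect to be the main obstacle — is showing that for $\lambda$ large the limit $u_{\lambda}$ is nontrivial, equivalently ruling out vanishing. The mechanism, following Bartsch-Wang, is a splitting estimate on the region where the potential well is not yet deep: set $A_{M_{0}}=a^{-1}(-\infty,M_{0}]$, which has finite measure by $(a_{2})$, and $B=\{a>M_{0}\}$. On $B$ one has $V_{\lambda}(x)\geq\lambda M_{0}+1$, so the contribution $\int_{B}V_{\lambda}\Phi(|u_{\lambda}|)$ forces the modular $\int_{B}\Phi(|u_{\lambda}|)$ to be $O(1/\lambda)$. On $A_{M_{0}}$, finiteness of the measure together with the boundedness of $\{u_{\lambda}\}$ in $W^{1,\Phi}$ and the subcritical growth allows one to control $\int_{A_{M_{0}}}F(u_{\lambda})$ (using an interpolation between $L^{\Phi}(A_{M_{0}})$ and $L^{\Phi_{*}}$, plus the fact that near $0$ the nonlinearity is $o(\phi(|u|)u)$). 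The delicate point in the Orlicz setting is that $\Phi$ need not be homogeneous, so one must argue at the level of modulars and use the $\Delta_{2}$-type bounds $(\phi_{2})$ to pass between modular and norm; I would isolate a lemma stating that if $u_{n}\rightharpoonup 0$ in $W^{1,\Phi}$, $\int_{B}\Phi(|u_{n}|)\to 0$, and $\mu(A_{M_{0}})<\infty$, then $\int_{\mathbb{R}^{N}}F(u_{n})\to 0$ and $\int_{\mathbb{R}^{N}}f(u_{n})u_{n}\to 0$. Granting this, if $u_{\lambda}=0$ for a sequence $\lambda_{n}\to\infty$ then from $\langle\mathcal{J}_{\lambda_{n}}'(u_{\lambda_{n}}),u_{\lambda_{n}}\rangle\to 0$ and $\mathcal{J}_{\lambda_{n}}(u_{\lambda_{n}})\to c_{\lambda_{n}}$ one deduces $\int\Phi(|\nabla u_{\lambda_{n}}|)+\int V_{\lambda_{n}}\Phi(|u_{\lambda_{n}}|)\to 0$, whence $c_{\lambda_{n}}\to 0$, contradicting $c_{\lambda_{n}}\geq\alpha>0$. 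Therefore there exists $\lambda^{*}$ such that for $\lambda\geq\lambda^{*}$ the solution $u_{\lambda}$ is nontrivial, which is the assertion of Theorem \ref{t3}.
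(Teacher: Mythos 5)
Your overall route is the same Bartsch--Wang strategy that the paper follows: uniform (in $\lambda\ge 1$) mountain-pass geometry, the upper bound $c_{\lambda}\le\tilde{c}$ obtained from a test function supported in the interior of $a^{-1}(0)$, bounded $(PS)_{c_{\lambda}}$ sequences whose weak limits are critical points, and a non-vanishing argument based on splitting $\mathbb{R}^{N}$ into the finite-measure set $\{a\le M_{0}\}$ and the deep-well set $\{a>M_{0}\}$, on which the modular of the PS sequence is $O(1/\lambda)$. Up to that point the proposal matches the paper's proof.

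The gap is in the execution of the non-vanishing step, and it is a quantifier problem. The lemma you propose to isolate assumes $\int_{B}\Phi(|u_{n}|)\to 0$ as $n\to\infty$. For a fixed $\lambda$ this hypothesis is not available: the Ambrosetti--Rabinowitz estimate only yields $\varlimsup_{n}\int_{B}\Phi(|u_{n}^{\lambda}|)\le C/(\lambda M_{0}+1)$ with $C$ independent of $\lambda$; the bound is small in $\lambda$, not decaying in $n$. Hence for fixed $\lambda$ you cannot conclude $\int f(u_{n}^{\lambda})u_{n}^{\lambda}\to 0$ even when the weak limit vanishes --- you only get $\varlimsup_{n}\int|f(u_{n}^{\lambda})u_{n}^{\lambda}|\le \varepsilon C'+C_{\varepsilon}\bigl(o_{n}(1)+\kappa C/(\lambda M_{0}+1)\bigr)$, where $C_{\varepsilon}$ blows up as $\varepsilon\to0$. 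Your closing contradiction compounds this by writing $\langle\mathcal{J}_{\lambda_{n}}'(u_{\lambda_{n}}),u_{\lambda_{n}}\rangle\to 0$ and $\mathcal{J}_{\lambda_{n}}(u_{\lambda_{n}})\to c_{\lambda_{n}}$ for the weak limits $u_{\lambda_{n}}$: these are exact critical points, and if they vanish then $\mathcal{J}_{\lambda_{n}}(u_{\lambda_{n}})=0$, not $c_{\lambda_{n}}$. The objects carrying the level are the PS sequences, so you would need a diagonal extraction (together with a proof that the diagonal sequence still converges weakly to $0$), which you have not supplied. The paper avoids all of this by making the estimates quantitative and arguing directly rather than by contradiction: Lemma \ref{l3} gives $\varliminf_{n}\int\Psi(|u_{n}^{\lambda}|)\ge\alpha>0$ uniformly in $\lambda\ge1$ (here the uniform bound $c_{\lambda}\le\tilde{c}$ is used to obtain a $\lambda$-independent bound on $\int\Phi_{\ast}(|u_{n}^{\lambda}|)$, which is needed precisely because $f$ is only asymptotically subcritical), and Lemma \ref{l4} shows that for $\lambda\ge\lambda^{\ast}$ the $\Psi$-mass outside a fixed ball is at most $\alpha/2$; the compact local embedding then forces $\int_{|x|<R}\Psi(|u^{\lambda}|)\ge\alpha/2$, so $u^{\lambda}\ne 0$. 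You should either restructure your final step along these lines, or restate your lemma with the explicit $O(1/\lambda)$ hypothesis and track how the resulting upper bound on $c_{\lambda}$ tends to $0$ as $\lambda\to\infty$, contradicting the uniform lower bound $c_{\lambda}\ge c_{0}>0$.
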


Our Theorems \ref{t1} and \ref{t3} are generalizations of Theorem 2.1 and part
of Theorem 2.4 in Bartsch and Wang \cite{MR1349229}, respectively. However, even for the semilinear case that $\phi(t)\equiv1$, our Theorem \ref{t3} is slightly general than the corresponding result in \cite{MR1349229}, because in $(f_1)$ we only require $f$ to be asymptotically subcritical, that is the second limit in (\ref{elm}) holds, while  in \cite[Theorem 2.4]{MR1349229} the nonlinearity $f$ is strictly subcritical, meaning that the growth of $f$ at infinity is controlled by a subcritical power function $|t|^{q-2}t$ for some $q\in(2,2^*)$.  See remark \ref{r26} for more details.
Roughly
speaking, Theorem \ref{t2} also generalizes Rabinowitz \cite[Theorem
4.27]{MR1162728}. 

Both \cite{MR1349229} and \cite{MR1162728} are concerned on
the semilinear equation (\ref{esh}). Our $\Phi$-Laplacian equation (\ref{e1}) is much more general.

The paper is organized as follows. In Section 2 we recall some concepts and results about Orlicz spaces and prove the compact embedding lemma (Lemma \ref{l0}) mentioned before. The existence of nontrivial solutions is proved in Section 3. Finally, in Section 4 we deal with the multiplicity result stated in Theorem \ref{t1} (2).

\section{Orlicz-Sobolev spaces}

In this section, we recall some results about Orlicz spaces and Orlicz-Sobolev
spaces that we will use for proving our main results. The reader is refereed
to \cite{MR3328350,MR2271234} and the references therein, in particular
\cite{MR2424078}, for more details.

A convex, even continuous function $\Phi:\mathbb{R}\rightarrow\lbrack
0,\infty)$ is called a nice Young function, $\mathcal{N}$-function for short,
if $\Phi(t)=0$ is equivalent to $t=0$, and%
\[
\lim_{t\rightarrow0}\frac{\Phi(t)}{t}=0\text{,\qquad}\lim_{t\rightarrow
+\infty}\frac{\Phi(t)}{t}=+\infty\text{.}%
\]
The $\mathcal{N}$-function $\Phi$ satisfies the $\Delta_{2}$-condition if
there is a constant $K>0$ such that%
\[
\Phi(2t)\leq K\Phi(t)\text{\qquad for all }t\geq0\text{.}%
\]
Then, for an open subset $\Omega$ of $\mathbb{R}^{N}$, under the
natural addition and scale multiplication,%
\[
L^{\Phi}(\Omega)=\left\{  u:\Omega\rightarrow\mathbb{R}\left\vert \,u\text{ is
measurable, }\int_{\Omega}\Phi(\left\vert u\right\vert )<\infty\right.
\right\}
\]
is a vector space. Equipped with the Luxemburg norm%
\[
\left\vert u\right\vert _{\Phi}=\inf\left\{  \alpha>1\left\vert \,\int%
_{\Omega}\Phi\left(  \frac{\left\vert u\right\vert }{\alpha}\right)
\leq1\right.  \right\}  \text{,}%
\]
$L^{\Phi}(\Omega)$ is a Banach space, called Orlicz space. The Orlicz-Sobolev
space $W^{1,\Phi}(\mathbb{R}^{N})$ is the completion of $C_{0}^{\infty
}(\mathbb{R}^{N})$ under the norm%
\begin{equation}
\left\Vert u\right\Vert _{1}=\left\vert \nabla u\right\vert _{\Phi}+\left\vert
u\right\vert _{\Phi}\text{.} \label{en}%
\end{equation}
The complement function of $\Phi$, denoted by $\tilde{\Phi}$, is given by the
Legendre transformation%
\[
\tilde{\Phi}(s)=\max_{t\geq0}\left\{  st-\Phi(t)\right\}  \text{\qquad for
}s\geq0\text{.}%
\]
Then,%
\begin{equation}
\tilde{\Phi}(\Phi^{\prime}(t))\leq\Phi(2t)\text{\qquad for }t\geq0\text{,}
\label{ep1}%
\end{equation}
and we have the H\"{o}lder inequality
\begin{equation}
\int_{\Omega}|uv|\le2|u|_{\Phi}|v|_{\tilde{\Phi}} \label{eh}%
\end{equation}
for $u\in L^{\Phi}(\Omega)$ and $v\in L^{\tilde{\Phi}}(\Omega)$.

When%
\[
\int_{1}^{+\infty}\frac{\Phi^{-1}(s)}{s^{(N+1)/N}}\,\mathrm{d}s=+\infty\text{,}%
\]
the function $\Phi_{\ast}$ given by%
\begin{equation}
\Phi_{\ast}^{-1}(t)=\int_{0}^{t}\frac{\Phi^{-1}(s)}{s^{(N+1)/N}}\,\mathrm{d}s
\label{esc}%
\end{equation}
is called the Sobolev conjugate function of $\Phi$. It is known that similar
to (\ref{e0}) we have
\begin{equation}
\ell^{\ast}\leq\frac{\phi_{\ast}(\left\vert t\right\vert )t^{2}}{\Phi_{\ast
}(t)}\leq m^{\ast}\text{\qquad for all }t\neq0\text{.} \label{ep2}%
\end{equation}
It is also known that, if $\Phi$ and $\tilde{\Phi}$ satisfy the $\Delta_{2}%
$-condition, then $L^{\Phi}(\Omega)$ and $W^{1,\Phi}(\mathbb{R}^{N})$ are
reflexive and separable. Moreover,
\begin{align}
u_{n}\rightarrow u\text{ in }L^{\Phi}(\Omega)\quad &  \Longleftrightarrow
\quad\int_{\Omega}\Phi(\left\vert u_{n}-u\right\vert )\rightarrow
0\text{,}\label{e4}\\
u_{n}\rightarrow u\text{ in }W^{1,\Phi}(\mathbb{R}^{N})\quad &
\Longleftrightarrow\quad\int_{\mathbb{R}^{N}}(\Phi(\left\vert \nabla
u_{n}-\nabla u\right\vert )+\Phi(\left\vert u_{n}-u\right\vert ))\rightarrow
0\text{.} \label{e7}%
\end{align}
In addition, $\left\{  u_{n}\right\}  $ is bounded in $L^{\Phi}(\mathbb{R}%
^{N})$ if and only if $\left\{  \Phi(\left\vert u_{n}\right\vert )\right\}  $
is bounded in $L^{1}(\mathbb{R}^{N})$. This can be seem by setting $V=1$ in (\ref{e5}) below.

Let $\Psi$ be an $\mathcal{N}$-function verifying $\Delta_{2}$-condition. It is
well known that if%
\[
\varlimsup_{t\rightarrow0}\frac{\Psi( t) }{\Phi( t) }<+\infty\text{,\qquad
}\varlimsup_{\left\vert t\right\vert \rightarrow+\infty}\frac{\Psi( t) }{\Phi_*(
t) }<+\infty\text{,}%
\]
then we have a continuous embedding $W^{1,\Phi}(\mathbb{R}^{N})\hookrightarrow
L^{\Psi}(\mathbb{R}^{N})$. Moreover, if%
\begin{equation}
\lim_{\left\vert t\right\vert \rightarrow0}\frac{\Psi(t)}{\Phi(t)}%
<+\infty\text{,\qquad}\lim_{\left\vert t\right\vert \rightarrow\infty}%
\frac{\Psi(t)}{\Phi_{\ast}(t)}=0\text{,} \label{e2}%
\end{equation}
then the embedding $W^{1,\Phi}(\mathbb{R}^{N})\hookrightarrow L_{\mathrm{loc}%
}^{\Psi}(\mathbb{R}^{N})$ is compact, such $\Psi$ is call subcritical.

For the study of problem (\ref{e1}), we introduce the following subspace of
$W^{1,\Phi}(\mathbb{R}^{N})$. Assuming $( V_{0}) \,,( \phi_{1}) $ and $(
\phi_{2}) $, on the linear subspace%
\[
X=\left\{  u\in W^{1,\Phi}(\mathbb{R}^{N})\left\vert \,\int_{\mathbb{R}^{N}}V(
x) \Phi( \left\vert u\right\vert ) <\infty\right.  \right\}
\]
we equip the norm $\left\Vert u\right\Vert =\left\vert \nabla u\right\vert
_{\Phi}+\left\vert u\right\vert _{\Phi,V}$, where
\[
\left\vert u\right\vert _{\Phi,V}=\inf\left\{  \alpha>0\left\vert
\,\int_{\Omega}V( x) \Phi\left(  \frac{\left\vert u\right\vert }{\alpha
}\right)  \leq1\right.  \right\}  \text{.}%
\]
Then $( X,\left\Vert \cdot\right\Vert ) $ is a separable reflexive Banach
space, which will be simply denoted by $X$. If $V$ is bounded, then $X$ is
precisely the original Orlicz-Sobolev space $W^{1,\Phi}( \mathbb{R}^{N}) $,
the norm $\left\Vert \cdot\right\Vert $ is equivalent to the one given in
(\ref{en}).

\begin{lemma}
\label{l5}Assume that $( V_{0}) $, $( \phi_{1}) $ and $( \phi_{2}) $ hold and
for $t\geq0$ let%
\begin{equation}
\xi_{0}( t) =\min\left\{  t^{\ell},t^{m}\right\}  \text{,\qquad}\xi_{1}( t)
=\max\left\{  t^{\ell},t^{m}\right\}  \text{.} \label{e14}%
\end{equation}
Then for all $u\in X$ we have%
\begin{equation}
\xi_{0}( \left\vert u\right\vert _{\Phi,V}) \leq\int_{\mathbb{R}^{N}}V( x)
\Phi( \left\vert u\right\vert ) \leq\xi_{1}( \left\vert u\right\vert _{\Phi
,V}) \text{.} \label{e5}%
\end{equation}

\end{lemma}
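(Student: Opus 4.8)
The plan is to reduce the statement about the Luxemburg-type norm $|u|_{\Phi,V}$ to a scaling property of the "modular" $\rho_V(u):=\int_{\mathbb{R}^N}V(x)\Phi(|u|)$, exactly as one does for ordinary Orlicz spaces. The algebraic engine is the pair of inequalities
\[
\xi_0(\sigma)\,\Phi(t)\le\Phi(\sigma t)\le\xi_1(\sigma)\,\Phi(t)\qquad\text{for all }t\ge0,\ \sigma\ge0,
\]
which follow from $(\phi_2)$: writing $g(t)=\Phi(t)$, condition \eqref{e0} says $\ell\le tg'(t)/g(t)\le m$, and integrating $d\log g(t)=\bigl(tg'(t)/g(t)\bigr)d\log t$ from $t$ to $\sigma t$ gives $\sigma^\ell\le g(\sigma t)/g(t)\le\sigma^m$ when $\sigma\ge 1$ and the reversed pair of exponents when $0\le\sigma\le 1$; in all cases this is precisely $\xi_0(\sigma)\le\Phi(\sigma t)/\Phi(t)\le\xi_1(\sigma)$. (This inequality is standard in the references cited, e.g. \cite{MR2424078,MR3328350}, so I would simply quote it.) Multiplying by $V(x)\ge V_0>0$ and integrating over $\mathbb{R}^N$ yields the homogeneity-type bounds
\[
\xi_0(\sigma)\,\rho_V(u)\le\rho_V(\sigma u)\le\xi_1(\sigma)\,\rho_V(u)\qquad\text{for all }\sigma\ge 0,\ u\in X.
\]

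Next I would record two elementary facts about $\alpha\mapsto|u|_{\Phi,V}$, valid because $\Phi$ is a continuous $\mathcal N$-function satisfying $\Delta_2$ (hence $\rho_V$ is finite on $X$, continuous and strictly increasing along rays): for $u\neq 0$, writing $\alpha_*=|u|_{\Phi,V}$, one has $\rho_V(u/\alpha_*)=1$, i.e. the infimum in the definition of the norm is attained and equals $1$. This is the usual "norm–modular" identity; it follows from continuity of $t\mapsto\rho_V(tu)$, the fact that it is $0$ at $t=0$ and $+\infty$ as $t\to\infty$ (using the left half of the displayed homogeneity bound with $\Phi(t)/t\to\infty$), and monotonicity.

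With these in hand the lemma is immediate. Fix $u\neq 0$ and set $\alpha_*=|u|_{\Phi,V}$, so $\rho_V(u/\alpha_*)=1$. Apply the homogeneity bound with $\sigma=\alpha_*$ and the vector $u/\alpha_*$:
\[
\rho_V(u)=\rho_V\!\Bigl(\alpha_*\cdot\tfrac{u}{\alpha_*}\Bigr)\le\xi_1(\alpha_*)\,\rho_V\!\Bigl(\tfrac{u}{\alpha_*}\Bigr)=\xi_1(\alpha_*)=\xi_1(|u|_{\Phi,V}),
\]
and likewise $\rho_V(u)\ge\xi_0(\alpha_*)=\xi_0(|u|_{\Phi,V})$, which is \eqref{e5}. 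For $u=0$ both sides vanish, so the claim holds trivially.

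I do not expect any serious obstacle here; the only point requiring a little care is the norm–modular identity $\rho_V(u/|u|_{\Phi,V})=1$, which needs the $\Delta_2$-condition (to guarantee $\rho_V(u)<\infty$ for $u\in X$, so that the infimum set is nonempty) together with continuity and strict monotonicity of $t\mapsto\rho_V(tu)$ on $(0,\infty)$. Once that identity is granted, the rest is a one-line substitution. An alternative route, avoiding the attainment statement, is to argue directly from the definition of the infimum with $\varepsilon$-approximants and then let $\varepsilon\to 0$ using continuity of $\xi_0,\xi_1$; this is slightly longer but needs no regularity of $\rho_V$ beyond monotonicity, and I would mention it as a fallback.
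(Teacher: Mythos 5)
Your proposal is correct and follows essentially the same route as the paper: both rest on the scaling inequality $\xi_0(\sigma)\Phi(t)\le\Phi(\sigma t)\le\xi_1(\sigma)\Phi(t)$ (which the paper quotes from Fukagai--Ito--Narukawa) and then substitute $\sigma=|u|_{\Phi,V}$, the paper using only $\int V\Phi(|u|/|u|_{\Phi,V})\le 1$ for the upper bound and leaving the lower bound as ``similar,'' while you invoke the attained norm--modular identity $\rho_V(u/|u|_{\Phi,V})=1$, with the $\varepsilon$-approximant argument as a fallback. No gap.
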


\begin{proof}
According to \cite[Lemma 2.1]{MR2271234}, we have%
\begin{equation}
\xi_{0}( \rho) \Phi( t) \leq\Phi( \rho t) \leq\xi_{1}( \rho) \Phi( t)
\text{\qquad for }\rho,t\geq0\text{.} \label{e6}%
\end{equation}
Taking $\rho=\left\vert u\right\vert _{\Phi,V}$ and $t=|u( x) |/\left\vert
u\right\vert _{\Phi,V}$, we get%
\begin{align*}
\int_{\mathbb{R}^{N}}V( x) \Phi( \left\vert u\right\vert )  &  =\int%
_{\mathbb{R}^{N}}V( x) \Phi\left(  \left\vert u\right\vert _{\Phi,V}%
\frac{\left\vert u\right\vert }{\left\vert u\right\vert _{\Phi,V}}\right) \\
&  \leq\xi_{1}( \left\vert u\right\vert _{\Phi,V}) \int_{\mathbb{R}^{N}%
}V(x)\Phi\left(  \frac{\left\vert u\right\vert }{\left\vert u\right\vert
_{\Phi,V}}\right)  \leq\xi_{1}( \left\vert u\right\vert _{\Phi,V})
\end{align*}
because by the definition of $|\cdot|_{\Phi,V}$, the integral in the last line is not greater than $1$.
The first inequality in (\ref{e5}) can be proved similarly.
\end{proof}

\begin{remark}
Similar to (\ref{e6}), because of (\ref{ep2}), for%
\[
\xi_{0}^{\ast}(t)=\min\left\{  t^{\ell^{\ast}},t^{m^{\ast}}\right\}
\text{,\qquad}\xi_{1}^{\ast}(t)=\max\left\{  t^{\ell^{\ast}},t^{m^{\ast}%
}\right\}  \text{,}%
\]
we have%
\[
\xi_{0}^{\ast}(\rho)\Phi_{\ast}(t)\leq\Phi_*(\rho t)\leq\xi_{1}^{\ast}(\rho
)\Phi_{\ast}(t)\text{\qquad for }\rho,t\geq0\text{.}%
\]
Because $m<\ell^{\ast}$, using this and (\ref{e6}) we have%
\[
0<\frac{\Phi(t)}{\Phi_{\ast}(t)}\leq\frac{\Phi(1)\xi_{1}(t)}{\Phi_{\ast}%
(1)\xi^{\ast}_{0}(t)}\leq\frac{\Phi(1)}{\Phi_{\ast}(1)}\frac{t^{m}}%
{t^{\ell^{\ast}}}\rightarrow0\text{\qquad as }\left\vert t\right\vert
\rightarrow\infty\text{.}%
\]
Therefore, $\Phi$ is an $\mathcal{N}$-function verifying $\Delta_{2}$-condition
and (\ref{e2}).
\end{remark}

\begin{lemma}
\label{l0}Suppose $\phi$ satisfies $(\phi_{1})$, $(\phi_{2})$; $V$ satisfies
$(V_{0})$, $(V_{1})$. Then for any $\mathcal{N}$-function $\Psi$ verifying
$\Delta_{2}$-condition and \eqref{e2}, the embedding $X\hookrightarrow
L^{\Psi}(\mathbb{R}^{N})$ is compact. In particular, $X\hookrightarrow
L^{\Phi}(\mathbb{R}^{N})$ is compact.
\end{lemma}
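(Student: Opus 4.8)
The plan is to adapt the Bartsch--Wang compactness argument \cite{MR1349229} to the Orlicz setting. Since $X$ is reflexive, every bounded sequence in $X$ has a subsequence converging weakly in $X$, and convergence in $L^{\Psi}(\mathbb{R}^{N})$ is characterized by (\ref{e4}); hence it suffices to prove that if $u_{n}\rightharpoonup 0$ weakly in $X$ then $\int_{\mathbb{R}^{N}}\Psi(|u_{n}|)\to 0$ (the case of a nonzero weak limit $u$ reduces to this by considering $u_{n}-u$). Such a sequence $\{u_{n}\}$ is bounded in $X$, hence, using $V\ge V_{0}>0$, Lemma \ref{l5}, and the continuous Sobolev embedding $W^{1,\Phi}(\mathbb{R}^{N})\hookrightarrow L^{\Phi_{\ast}}(\mathbb{R}^{N})$, both $A:=\sup_{n}\int_{\mathbb{R}^{N}}\Phi_{\ast}(|u_{n}|)$ and $B:=\sup_{n}\int_{\mathbb{R}^{N}}V(x)\Phi(|u_{n}|)$ are finite; moreover $u_{n}\rightharpoonup 0$ in $W^{1,\Phi}(\mathbb{R}^{N})$ as well.

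Two elementary facts will be used. First, from (\ref{e2}) together with the continuity of the $\mathcal{N}$-functions involved (the first limit in (\ref{e2}) bounds $\Psi/\Phi$ near $0$, the second gives $\Psi\le\delta\Phi_{\ast}$ for $|t|$ large, and $\Psi/\Phi$ is bounded on the remaining compact range), for every $\delta>0$ there is $C_{\delta}>0$ such that
\[
\Psi(t)\le C_{\delta}\,\Phi(t)+\delta\,\Phi_{\ast}(t)\qquad\text{for all }t\in\mathbb{R}.
\]
Second, since $\Phi(t)/\Phi_{\ast}(t)\to 0$ as $|t|\to\infty$ (see the Remark following Lemma \ref{l5}) and $A<\infty$, the de la Vall\'{e}e--Poussin criterion makes the family $\{\Phi(|u_{n}|)\}$ uniformly integrable on $\mathbb{R}^{N}$; quantitatively, for every $\eta>0$ one can pick $K>0$ with $\Phi(t)\le\eta\Phi_{\ast}(t)$ for $|t|\ge K$, whence $\int_{E}\Phi(|u_{n}|)\le\Phi(K)\,\mu(E)+\eta A$ for every measurable $E\subseteq\mathbb{R}^{N}$ and every $n$.

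The core is a tail estimate uniform in $n$: given $\varepsilon>0$ there is $R>0$ with $\int_{B_{R}^{c}}\Psi(|u_{n}|)<\varepsilon$ for all $n$, where $B_{R}$ is the ball of radius $R$ centered at the origin. Applying the first fact with $\delta$ so small that $\delta A<\varepsilon/2$, it remains to control $C_{\delta}\int_{B_{R}^{c}}\Phi(|u_{n}|)$. Split $B_{R}^{c}$ according to whether $V>M$ or $V\le M$: on $\{V>M\}$ one has $\int_{\{V>M\}}\Phi(|u_{n}|)\le B/M$, so a large $M$ makes this piece small; on $E_{R}:=B_{R}^{c}\cap\{V\le M\}$ the hypothesis $(V_{1})$ gives $\mu(\{V\le M\})<\infty$, hence $\mu(E_{R})\to 0$ as $R\to\infty$, and the uniform integrability (second fact, with $\eta$ chosen small and then $R$ large) makes this piece small too. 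With $R$ fixed this way, the compact embedding $W^{1,\Phi}(\mathbb{R}^{N})\hookrightarrow L_{\mathrm{loc}}^{\Psi}(\mathbb{R}^{N})$ (available because $\Psi$ verifies (\ref{e2})) together with $u_{n}\rightharpoonup 0$ in $W^{1,\Phi}(\mathbb{R}^{N})$ gives $\int_{B_{R}}\Psi(|u_{n}|)\to 0$; adding the tail estimate yields $\limsup_{n}\int_{\mathbb{R}^{N}}\Psi(|u_{n}|)\le\varepsilon$, and letting $\varepsilon\to 0$ completes the argument. Taking $\Psi=\Phi$ (legitimate by the Remark following Lemma \ref{l5}) gives the last assertion.

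I expect the tail estimate to be the only genuine obstacle, and within it the delicate point is the order of the choices: $\delta$ is chosen first against $A$, which fixes $C_{\delta}$; then $M$ is chosen against $B$; and only then is $R$ chosen large enough that both $\mu(E_{R})$ is small and, through the de la Vall\'{e}e--Poussin constant $K$, the uniformly integrable remainder is small. The assumption $(V_{1})$ enters exactly here, guaranteeing $\mu(E_{R})\to 0$, which is precisely what substitutes for the global compactness that fails on $\mathbb{R}^{N}$.
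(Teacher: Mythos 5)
Your proposal is correct and follows essentially the same route as the paper: reduce to $u_{n}\rightharpoonup 0$, control the region where $|u_{n}|$ is large via the $\Phi_{\ast}$ bound, split the exterior of a ball by the level of $V$ using $(V_{1})$, and finish with the compact local embedding. The only (harmless) cosmetic differences are that you encode the large/small split of $|u_{n}|$ as a pointwise inequality $\Psi\le C_{\delta}\Phi+\delta\Phi_{\ast}$ and treat the small-measure set via uniform integrability of $\{\Phi(|u_{n}|)\}$, where the paper instead integrates $\Psi(|u_{n}|)$ over $\{|u_{n}|\le k\}$ and bounds it by $\mu(B_{R})\sup_{[0,k]}\Psi$.
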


\begin{proof}
Assume that $\left\{  u_{n}\right\}  $ is a sequence in $X$ such that
$u_{n}\rightharpoonup0$ in $X$, we want to show that $u_{n}\rightarrow0$ in
$L^{\Psi}(\mathbb{R}^{N})$. Firstly, we have $\left\Vert u_{n}\right\Vert \leq
C_{1}$ for some $C_{1}>0$.

For any $\varepsilon>0$, by (\ref{e2}) there is $k>0$ such that%
\[
\Psi(t)=\Psi(\left\vert t\right\vert )\leq\varepsilon\Phi_{\ast}(\left\vert
t\right\vert )\text{,\qquad for }\left\vert t\right\vert >k\text{.}%
\]
Since the embedding $X\hookrightarrow L^{\Phi_{\ast}}(\mathbb{R}^{N})$ is
continuous and $\left\Vert u_{n}\right\Vert \leq C_{1}$, we deduce that
$\left\{  u_{n}\right\}  $ in bounded in $L^{\Phi_{\ast}}(\mathbb{R}^{N})$.
Hence
\begin{equation}
\int_{\left\vert u_{n}\right\vert >k}\Psi(\left\vert u_{n}\right\vert
)\leq\varepsilon\int_{\mathbb{R}^{N}}\Phi_{\ast}(\left\vert u_{n}\right\vert
)\leq\varepsilon\xi_{1}^{\ast}(\left\vert u_{n}\right\vert _{\Phi_{\ast}})\leq
C_{2}\varepsilon\text{,} \label{ex}%
\end{equation}
for some $C_2>0$, where we have used an inequality for $\Phi_{\ast}$ similar to (\ref{e5}).

Given $M>0$ and $R>0$, set%
\begin{align*}
A_{R}  &  =\left\{  \left.  x\in\mathbb{R}^{N}\right\vert \,\left\vert
x\right\vert \geq R,V(x)\geq M\right\}  \text{,}\\
B_{R}  &  =\left\{  \left.  x\in\mathbb{R}^{N}\right\vert \,\left\vert
x\right\vert \geq R,V(x)<M\right\}  \text{.}%
\end{align*}
By the first limit in (\ref{e2}), there is $\kappa>0$ such that for
$t\in\left[  0,k\right]  $ we have $\Psi(t)\leq\kappa\Phi(t)$. Because
$\big\{  \vert u_{n}\vert _{\Phi,V}\big\}  $ is bounded, we can
take $M>0$ large enough such that%
\[
\frac{\kappa}{M}\xi_{1}(\left\vert u_{n}\right\vert _{\Phi,V})<\varepsilon
\text{.}%
\]
It follows from (\ref{e2}) that $\Psi$ is bounded in $\left[  0,k\right]  $.
By $(V_{1})$, we have $\mu(B_{R})\rightarrow0$ as $R\rightarrow\infty$, thus
we can choose $R>0$ such that%
\[
\mu(B_{R})\cdot\sup_{\left[  0,k\right]  }\Psi<\varepsilon\text{.}%
\]
Using the above inequalities and Lemma \ref{l5} we have%
\begin{align*}
\int_{A_{R}\cap\left\{  \left\vert u_{n}\right\vert \leq k\right\}  }%
\Psi(\left\vert u_{n}\right\vert )  &  \leq\kappa\int_{A_{R}\cap\left\{
\left\vert u_{n}\right\vert \leq k\right\}  }\Phi(\left\vert u_{n}\right\vert
)\\
&  \leq\frac{\kappa}{M}\int_{\mathbb{R}^{N}}V(x)\Phi(\left\vert u_{n}%
\right\vert )\leq\frac{\kappa}{M}\xi_{1}(\left\vert u_{n}\right\vert _{\Phi
,V})\leq\varepsilon\text{,}\\
\int_{B_{R}\cap\left\{  \left\vert u_{n}\right\vert \leq k\right\}  }%
\Psi(\left\vert u_{n}\right\vert )&\leq\mu(B_{R})\cdot\sup_{\left[  0,k\right]
}\Psi<\varepsilon\text{.}%
\end{align*}%
Consequently,%
\begin{align*}
\int_{\left\vert u_{n}\right\vert \leq k}\Psi(\left\vert u_{n}\right\vert )
&  \leq\left(  \int_{\left\vert x\right\vert \leq R}+\int_{A_{R}\cap\left\{
\left\vert u_{n}\right\vert \leq k\right\}  }+\int_{B_{R}\cap\left\{
\left\vert u_{n}\right\vert \leq k\right\}  }\right)  \Psi(\left\vert
u_{n}\right\vert )\\
&  \leq\int_{\left\vert x\right\vert \leq R}\Psi(\left\vert u_{n}\right\vert
)+2\varepsilon\text{.}%
\end{align*}
Now using (\ref{ex}) we get%
\begin{align}
\int_{\mathbb{R}^{N}}\Psi(\left\vert u_{n}\right\vert )  &  =\int_{\left\vert
u_{n}\right\vert >k}\Psi(\left\vert u_{n}\right\vert )+\int_{\left\vert
u_{n}\right\vert \leq k}\Psi(\left\vert u_{n}\right\vert )\nonumber\\
&  \leq(C_{2}+2)\varepsilon+\int_{\left\vert x\right\vert \leq R}%
\Psi(\left\vert u_{n}\right\vert )\text{.} \label{e3}%
\end{align}
Since the embedding $X\hookrightarrow L_{\mathrm{loc}}^{\Psi}(\mathbb{R}^{N})$
is compact, from $u_{n}\rightharpoonup0$ in $X$ and (\ref{e3}) we have%
\[
\varlimsup_{n\rightarrow\infty}\int_{\mathbb{R}^{N}}\Psi(\left\vert
u_{n}\right\vert )\leq(C_{2}+2)\varepsilon\text{.}%
\]
Because $\varepsilon$ is arbitrary, this implies%
\[
\int_{\mathbb{R}^{N}}\Psi(\left\vert u_{n}\right\vert )\rightarrow0
\]
and $u_{n}\rightarrow0$ in $L^{\Psi}(\mathbb{R}^{N})$.
\end{proof}

\section{Nontrivial solutions}

From now on, we assume the conditions $(\phi_{1})$, $(\phi_{2})$, $(V_{0})$
and $(f_{1})$. Then, the functional $\mathcal{J}:X\rightarrow\mathbb{R}$ given
by%
\begin{equation}
\mathcal{J}(u)=\int_{\mathbb{R}^{N}}\Phi(|\nabla u|)+\int_{\mathbb{R}^{N}%
}V(x)\Phi(\left\vert u\right\vert )-\int_{\mathbb{R}^{N}}F(u) \label{ecc}%
\end{equation}
is of class $C^{1}$. The derivative of $\mathcal{J}$ is given by%
\[
\left\langle \mathcal{J}^{\prime}(u),v\right\rangle =\int_{\mathbb{R}^{N}}%
\phi(\left\vert \nabla u\right\vert )\nabla u\cdot\nabla v+\int_{\mathbb{R}%
^{N}}V(x)\phi(\left\vert u\right\vert )uv-\int_{\mathbb{R}^{N}}f(u)v\qquad
u,v\in X\text{.}%
\]
Thus, critical points of $\mathcal{J}$ are precisely weak solutions of our
problem (\ref{e1}).

Under the assumptions $(\phi_{1})$, $(\phi_{2})$, $(V_{0})$, $(f_{1})$ and
$(f_{2})$, it has also been proved in \cite[Lemma 4.1]{MR3328350} that
$\mathcal{J}$ satisfies the mountain pass geometry: for some $\rho>0$ and
$\varphi\in C_{0}^{\infty}(\mathbb{R}^{N})\backslash\left\{  0\right\}  $,%
\begin{equation}
\inf_{\left\Vert u\right\Vert =\rho}\mathcal{J}(u)=\eta>0\text{,\qquad}%
\lim_{t\rightarrow+\infty}\mathcal{J}(t\varphi)=-\infty\text{.}
\label{e12}%
\end{equation}

\begin{remark}
\label{rr0}In \cite{MR3328350}, $\mathcal{J}(t\varphi)\rightarrow-\infty$ as
$t\rightarrow+\infty$ is only verified for $\varphi\in C_{0}^{\infty
}(\mathbb{R}^{N})\backslash\left\{  0\right\}  $. But we can prove that
$\mathcal{J}$ is anti-coercive on any finite dimensional subspace, see the
verifivation of condition (2) of Proposition \ref{p1} in the proof of Theorem
\ref{t1} (2) below. Therefore, the limit in (\ref{e12}) is in fact valid for any
$\varphi\in X$.
\end{remark}

Denote $I=\left[  0,1\right]  $ and set%
\begin{equation}
c=\inf_{\gamma\in\Gamma}\max_{t\in\left[  0,1\right]  }\mathcal{J}(\gamma(t))
\label{e}%
\end{equation}
being $\Gamma=\left\{  \left.  \gamma\in C(I,X)\right\vert \,\gamma
(0)=0\text{, }\mathcal{J}(\gamma(1))<0\right\}  $. Note that $c\geq\eta>0$.

According to the mountain pass theorem \cite{MR0370183,MR1127041}, there is a
sequence $\left\{  u_{n}\right\}  \subset X$ such that%
\begin{equation}
\mathcal{J}( u_{n}) \rightarrow c\text{,\qquad}\mathcal{J}^{\prime}( u_{n})
\rightarrow0\text{.} \label{e10}%
\end{equation}
Such sequence is called a $( PS) _{c}$ sequence (named after R. Palais and S.
Smale). Under the assumptions $( \phi_{1}) $, $( \phi_{2}) $, $( V_{0}) $, $(
f_{1}) $ and $( f_{2}) $, it has been shown in \cite[Lemma 4.2]{MR3328350}
that, the $( PS) _{c}$ sequence $\left\{  u_{n}\right\}  $ we just obtained is
bounded in $X$.

The following result has been established in \cite[Lemma 4.3]{MR3328350}.

\begin{lemma}
\label{l1}Suppose $( \phi_{1}) $, $( \phi_{2}) $, $( V_{0}) $ and $( f_{1}) $
hold. Let $\left\{  u_{n}\right\}  $ be a $( PS) _{c}$ sequence of
$\mathcal{J}$. If $u_{n}\rightharpoonup u$ in $X$, then $\nabla u_{n}%
\rightarrow\nabla u$ a.e.\ in $\mathbb{R}^{N}$ and $\mathcal{J}^{\prime}( u)
=0$.
\end{lemma}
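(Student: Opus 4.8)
The plan is to establish the two assertions of Lemma \ref{l1} — the almost-everywhere convergence of the gradients and the fact that the weak limit $u$ is a critical point of $\mathcal{J}$ — by the now-standard technique of testing the equation with a truncation of $u_n-u$ and exploiting the strict monotonicity of the vector field $\xi \mapsto \phi(|\xi|)\xi$. Since this is quoted as \cite[Lemma 4.3]{MR3328350}, I would give only a sketch, but the sketch should make the structure transparent.

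\medskip

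\noindent\textbf{Step 1: Preliminary compactness and the monotonicity operator.} From $(\phi_1)$ the map $\mathcal{A}(\xi)=\phi(|\xi|)\xi$ on $\mathbb{R}^N$ satisfies the monotonicity inequality $(\mathcal{A}(\xi)-\mathcal{A}(\zeta))\cdot(\xi-\zeta)>0$ for $\xi\ne\zeta$; this is the analytic heart of the argument. Since $\{u_n\}$ is bounded in $X$ and $X$ embeds continuously into $W^{1,\Phi}(\mathbb{R}^N)$, after passing to a subsequence we may assume $u_n\rightharpoonup u$ in $X$, $u_n\to u$ a.e.\ in $\mathbb{R}^N$, and — using the local compact embedding $X\hookrightarrow L^\Psi_{\mathrm{loc}}(\mathbb{R}^N)$ with $\Psi=\Phi$ from Lemma \ref{l0} (or from (\ref{e2})) — $u_n\to u$ in $L^{\Phi}(B_R)$ for every ball $B_R$, and also $u_n\to u$ in $L^1_{\mathrm{loc}}$. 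One also records the standard consequence of $(f_1)$ and $\Delta_2$ that $f(u_n)\to f(u)$ in $L^{\tilde\Psi}_{\mathrm{loc}}$ for the appropriate companion function, so that $\int_{B_R} f(u_n)(u_n-u)\to 0$.

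\medskip

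\noindent\textbf{Step 2: Localization and the key limit.} Fix $R>0$ and a cutoff $\chi\in C_0^\infty(\mathbb{R}^N)$ with $\chi\equiv 1$ on $B_R$, $0\le\chi\le1$, $\operatorname{supp}\chi\subset B_{2R}$. Test $\mathcal{J}'(u_n)\to 0$ against $v_n=\chi(u_n-u)\in X$, which is bounded in $X$, to get $\langle\mathcal{J}'(u_n),v_n\rangle\to 0$. Expanding,
\[
\int_{\mathbb{R}^N}\chi\,\phi(|\nabla u_n|)\nabla u_n\cdot(\nabla u_n-\nabla u)
=o(1)-\int_{\mathbb{R}^N}(u_n-u)\phi(|\nabla u_n|)\nabla u_n\cdot\nabla\chi
-\int_{\mathbb{R}^N}\chi V(x)\phi(|u_n|)u_n(u_n-u)+\int_{\mathbb{R}^N}\chi f(u_n)(u_n-u).
\]
The second term on the right tends to $0$ because $\phi(|\nabla u_n|)\nabla u_n$ is bounded in $L^{\tilde\Phi}$ (by (\ref{ep1}) and (\ref{e0})) while $(u_n-u)\nabla\chi\to 0$ in $L^\Phi$ by the local strong convergence; the third and fourth terms vanish likewise by Step 1. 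Subtracting $\int\chi\,\phi(|\nabla u|)\nabla u\cdot(\nabla u_n-\nabla u)\to 0$ (weak convergence of $\nabla u_n$ against the fixed $L^{\tilde\Phi}$ function $\chi\phi(|\nabla u|)\nabla u$), we arrive at
\[
\int_{\mathbb{R}^N}\chi\,\big(\phi(|\nabla u_n|)\nabla u_n-\phi(|\nabla u|)\nabla u\big)\cdot(\nabla u_n-\nabla u)\longrightarrow 0.
\]
The integrand is nonnegative by monotonicity, so it converges to $0$ in $L^1(B_{2R})$, hence along a further subsequence pointwise a.e.; a standard argument (using that equality in the monotonicity inequality forces $\nabla u_n=\nabla u$, together with coercivity of $\mathcal{A}$ from $(\phi_2)$ to rule out $|\nabla u_n|\to\infty$) gives $\nabla u_n\to\nabla u$ a.e.\ in $B_R$. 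Letting $R\to\infty$ and a diagonal subsequence yields $\nabla u_n\to\nabla u$ a.e.\ in $\mathbb{R}^N$.

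\medskip

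\noindent\textbf{Step 3: Passing to the limit in the equation.} With $\nabla u_n\to\nabla u$ and $u_n\to u$ a.e., the fields $\phi(|\nabla u_n|)\nabla u_n$ and $V(x)\phi(|u_n|)u_n$ converge a.e.\ to the corresponding limits; being bounded in $L^{\tilde\Phi}(\mathbb{R}^N)$ (respectively in the weighted dual space), they converge weakly to those limits, and $f(u_n)\to f(u)$ weakly in the relevant local dual. Then for any $v\in C_0^\infty(\mathbb{R}^N)$, passing to the limit in $\langle\mathcal{J}'(u_n),v\rangle\to 0$ gives $\langle\mathcal{J}'(u),v\rangle=0$; density of $C_0^\infty$ in $X$ and continuity of $\mathcal{J}'$ give $\mathcal{J}'(u)=0$.

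\medskip

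\noindent\textbf{Main obstacle.} The delicate point is Step 2: making rigorous that every term except the monotone one is $o(1)$, which requires the Orlicz-space duality estimates ((\ref{ep1}), (\ref{eh}), and the $\xi_0,\xi_1$ bounds of Lemma \ref{l5}) to control $\phi(|\nabla u_n|)\nabla u_n$ in $L^{\tilde\Phi}$ uniformly, plus the local compactness of Lemma \ref{l0}; and then extracting a.e.\ convergence of the full gradient (not just of $|\nabla u_n|$) from $L^1$-convergence of the monotone integrand. Since this is exactly \cite[Lemma 4.3]{MR3328350}, in the paper one may simply cite it, but the sketch above is the argument one would reconstruct.
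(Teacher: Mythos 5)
The paper does not prove this lemma at all --- it is quoted verbatim from \cite[Lemma 4.3]{MR3328350} --- and your sketch is a faithful reconstruction of the localization-plus-monotonicity argument used there (test with $\chi(u_n-u)$, isolate the monotone term, deduce a.e.\ convergence of the gradients, then pass to the limit in the equation), so it is essentially the same approach and is correct. The only caveats are cosmetic: the a.e.\ convergence of $\nabla u_n$ is obtained along a subsequence, as is standard for $(PS)$ sequences; and in Step 3 it is cleaner to pass to the limit against a fixed $v\in X$ using boundedness of $V\,\Phi'(|u_n|)$ in the weighted dual space together with a.e.\ convergence, since density of $C_0^\infty(\mathbb{R}^N)$ in the weighted space $X$ is not established in the paper.
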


\subsection{Proof of Theorem \ref{t1} (1)}

By the above arguments, we know that $\mathcal{J}$ has a bounded $( PS) _{c}$
sequence $\left\{  u_{n}\right\}  $. Since $X$ is reflexive, we may assume
that $u_{n}\rightharpoonup u$ in $X$. By Lemma \ref{l1}, $u$ is a critical
point of $\mathcal{J}$. We need to show that $u\neq0$. Thanks to the compact
embedding established in Lemma \ref{l0}, this can be achieved as in
\cite[p.\ 454]{MR3328350}. For the reader's convenience, we include the
argument below.

Assume that $u=0$. By Lemma \ref{l0}, the embedding $X\hookrightarrow L^{\Phi
}(\mathbb{R}^{N})$ is compact. Thus, $u_{n}\rightarrow0$ in $L^{\Phi
}(\mathbb{R}^{N})$ and we get%
\begin{equation}
\int_{\mathbb{R}^{N}}\Phi(\left\vert u_{n}\right\vert )\rightarrow0\text{.}
\label{e8}%
\end{equation}
By $(f_{1})$, for any $\varepsilon>0$, there exists $C_{\varepsilon}>0$ such
that%
\[
\left\vert f(t)t\right\vert \leq\varepsilon\Phi_{\ast}(\left\vert t\right\vert
)+C_{\varepsilon}\Phi(\left\vert t\right\vert )\text{.}%
\]
Using this inequality and (\ref{e8}), and the boundedness of $\left\{
u_{n}\right\}  $ in $L^{\Phi_{\ast}}(\mathbb{R}^{N})$, we deduce%
\[
\int_{\mathbb{R}^{N}}f(u_{n})u_{n}\rightarrow0\text{.}%
\]
Now, because $\left\langle \mathcal{J}^{\prime}(u_{n}),u_{n}\right\rangle
\rightarrow0$, we obtain%
\[
\int_{\mathbb{R}^{N}}\phi(\left\vert \nabla u_{n}\right\vert )\left\vert
\nabla u_{n}\right\vert ^{2}+\int_{\mathbb{R}^{N}}V(x)\phi(\left\vert
u_{n}\right\vert )u_{n}^{2}\rightarrow0\text{.}%
\]
From this and $(\phi_{2})$ we get
\[
\int_{\mathbb{R}^{N}}\Phi(\left\vert \nabla u_{n}\right\vert )+\int%
_{\mathbb{R}^{N}}V(x)\Phi(\left\vert u_{n}\right\vert )\rightarrow0\text{.}%
\]
That is $u_{n}\rightarrow0$ in $X$. But $\mathcal{J}(u_{n})\rightarrow c>0$,
this is a contradiction.

\begin{remark}
In Lemma \ref{lm4} we will show that $\mathcal{J}$ satisfies the $(PS)$ condition. Hence the $( PS) _{c}$
sequence $\left\{  u_{n}\right\}  $ has a subsequence converges to a nonzero critical point $u$ at the level $c>0$. We include the above argument for its simplicity.
\end{remark}

\subsection{Proof of Theorem \ref{t2}}

For convenience, in this subsection we assume all the conditions on $\phi$,
$V$ and $f$ required in Theorem \ref{t2}. As before, $\mathcal{J}$ has a
bounded $( PS) _{c}$ sequence $\left\{  u_{n}\right\}  $, $u_{n}%
\rightharpoonup u$ in $X=W^{1,\Phi}( \mathbb{R}^{N}) $ and $u$ is a critical
point of $\mathcal{J}$. To show that $u\neq0$, we need to consider the
limiting functional $\mathcal{J}_{\infty}:X\rightarrow\mathbb{R}$,%
\[
\mathcal{J}_{\infty}( u) =\int_{\mathbb{R}^{N}}\Phi( \left\vert \nabla
u\right\vert ) +\int_{\mathbb{R}^{N}}V_{\infty}\Phi( \left\vert u\right\vert )
-\int_{\mathbb{R}^{N}}F( u) \text{.}%
\]

\begin{lemma}
\label{l2}If $u=0$, then $\left\{  u_{n}\right\}  $ is also a $( PS) _{c}$
sequence of $\mathcal{J}_{\infty}$.
\end{lemma}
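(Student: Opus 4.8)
The plan is to realize the difference $\mathcal{J}_\infty-\mathcal{J}$, and that of the derivatives, evaluated along $\{u_n\}$, as a weighted integral whose weight $W(x):=V_\infty-V(x)$ decays at infinity, and then to show that this integral is negligible because $u_n\rightharpoonup0$. By $(V_0)$ and $(V_2)$, $W\in C(\mathbb{R}^N)$ is nonnegative, bounded, and $W(x)\to0$ as $|x|\to\infty$. Since $V$ is bounded we have $X=W^{1,\Phi}(\mathbb{R}^N)$ with $\|\cdot\|$ equivalent to $\|\cdot\|_1$, and
\begin{align*}
\mathcal{J}_\infty(u_n)-\mathcal{J}(u_n)&=\int_{\mathbb{R}^N}W(x)\Phi(|u_n|),\\
\big\langle\mathcal{J}_\infty'(u_n)-\mathcal{J}'(u_n),v\big\rangle&=\int_{\mathbb{R}^N}W(x)\phi(|u_n|)u_nv .
\end{align*}
Hence it is enough to prove that $\int_{\mathbb{R}^N}W(x)\Phi(|u_n|)\to0$ and that $\sup_{\|v\|\le1}\big|\int_{\mathbb{R}^N}W(x)\phi(|u_n|)u_nv\big|\to0$; combined with $\mathcal{J}(u_n)\to c$ and $\mathcal{J}'(u_n)\to0$ this gives $\mathcal{J}_\infty(u_n)\to c$ and $\mathcal{J}_\infty'(u_n)\to0$. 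I shall use repeatedly that $\{u_n\}$ is bounded in $X$ and that $\Phi$ is subcritical, so that $X\hookrightarrow L_{\mathrm{loc}}^{\Phi}(\mathbb{R}^N)$ is compact and therefore $\int_{|x|\le R}\Phi(|u_n|)\to0$ for each fixed $R>0$.

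For the energy term, fix $\varepsilon>0$ and choose $R>0$ with $W(x)<\varepsilon$ on $\{|x|\ge R\}$. On $\{|x|\ge R\}$ the integral is at most $\varepsilon\int_{\mathbb{R}^N}\Phi(|u_n|)\le\varepsilon\,\xi_1(|u_n|_\Phi)\le C\varepsilon$, by Lemma \ref{l5} with $V\equiv1$ and the boundedness of $\{u_n\}$; on $\{|x|\le R\}$ it is at most $\|W\|_\infty\int_{|x|\le R}\Phi(|u_n|)\to0$. Letting $n\to\infty$ and then $\varepsilon\to0$ gives $\int_{\mathbb{R}^N}W(x)\Phi(|u_n|)\to0$.

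For the derivative term the additional tool is the H\"older inequality in Orlicz spaces. Since $|\phi(|u_n|)u_n|=\Phi'(|u_n|)$, from (\ref{ep1}) and the $\Delta_2$-condition we get $\tilde\Phi\big(\Phi'(|u_n|)\big)\le\Phi(2|u_n|)\le K\Phi(|u_n|)$; as $\{\Phi(|u_n|)\}$ is bounded in $L^1(\mathbb{R}^N)$ this shows $\{\Phi'(|u_n|)\}$ is bounded in $L^{\tilde\Phi}(\mathbb{R}^N)$. With $\varepsilon,R$ as above, on $\{|x|\ge R\}$ the H\"older inequality (\ref{eh}) bounds the integral by $2\varepsilon\,|\Phi'(|u_n|)|_{\tilde\Phi}\,|v|_\Phi\le C\varepsilon\|v\|$, using $|v|_\Phi\le\|v\|_1\le C\|v\|$. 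On $\{|x|\le R\}$, the convergence $\int_{|x|\le R}\Phi(|u_n|)\to0$ combined with $\tilde\Phi(\Phi'(|u_n|))\le K\Phi(|u_n|)$ and (\ref{e4}) shows $\Phi'(|u_n|)\to0$ in $L^{\tilde\Phi}(\{|x|\le R\})$, so this contribution is $o(1)\|v\|$. Therefore $\sup_{\|v\|\le1}\big|\langle\mathcal{J}_\infty'(u_n)-\mathcal{J}'(u_n),v\rangle\big|\le C\varepsilon+o(1)$; since $\varepsilon$ is arbitrary, $\mathcal{J}_\infty'(u_n)-\mathcal{J}'(u_n)\to0$ in $X^*$, and with $\mathcal{J}'(u_n)\to0$ the conclusion follows.

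I do not expect a real obstacle: the argument is a localization around the bounded region where $W$ is not small. The only delicate point is the uniform-in-$v$ bound for the derivative, which requires passing through (\ref{eh}), controlling $|\Phi'(|u_n|)|_{\tilde\Phi}$ by (\ref{ep1}) and $\Delta_2$, and promoting $\int_{|x|\le R}\Phi(|u_n|)\to0$ to norm convergence of $\Phi'(|u_n|)$ in $L^{\tilde\Phi}$ over the ball.
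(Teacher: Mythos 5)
Your proof is correct and follows essentially the same approach as the paper: decompose $\mathbb{R}^N$ into a ball $\{|x|\le R\}$ and its complement, use $(V_2)$ to make $V_\infty-V$ small outside the ball, use compactness of $X\hookrightarrow L^\Phi_{\mathrm{loc}}$ inside, and pass $R\to\infty$ after $n\to\infty$. You supply more detail than the paper (which dismisses the derivative estimate with ``in a similar manner''), correctly tracking the $\phi(|u_n|)$ factor in the derivative difference and controlling $|\Phi'(|u_n|)|_{\tilde\Phi}$ via \eqref{ep1} and H\"older; this fills in precisely what the paper leaves implicit.
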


\begin{proof}
By condition $(V_{2})$, for any $\varepsilon>0$, there is $R>0$ such that%
\[
\left\vert V(x)-V_{\infty}\right\vert <\varepsilon\text{\qquad for }\left\vert
x\right\vert \geq R\text{.}%
\]
If $u=0$, then $u_{n}\rightharpoonup0$ in $X$. By the compactness of the
embedding $X\hookrightarrow L_{\mathrm{loc}}^{\Phi}(\mathbb{R}^{N})$ we have%
\[
\int_{\left\vert x\right\vert <R}\Phi(\left\vert u_{n}\right\vert
)\rightarrow0\text{.}%
\]
Consequently,
\begin{align*}
\left\vert \mathcal{J}_{\infty}(u_{n})-\mathcal{J}(u_{n})\right\vert  &
=\int_{\mathbb{R}^{N}}( V_{\infty}-V(x)) \Phi(\left\vert
u_{n}\right\vert )\\
&  =\left(  \int_{\left\vert x\right\vert <R}+\int_{\left\vert x\right\vert
\geq R}\right)  (V_{\infty}-V(x))\Phi(\left\vert u_{n}\right\vert )\\
&  \leq V_{\infty}\int_{\left\vert x\right\vert <R}\Phi(\left\vert
u_{n}\right\vert )+\varepsilon\int_{\left\vert x\right\vert \geq R}%
\Phi(\left\vert u_{n}\right\vert )\\
&  \leq V_{\infty}\int_{\left\vert x\right\vert <R}\Phi(\left\vert
u_{n}\right\vert )+C\varepsilon\text{,}%
\end{align*}
because $\{u_{n}\}$ is bounded in $L^{\Phi}(\mathbb{R}^{N})$. It follows that
\[
\varlimsup_{n\rightarrow\infty}\left\vert \mathcal{J}_{\infty}(u_{n}%
)-\mathcal{J}(u_{n})\right\vert \leq C\varepsilon\text{,}%
\]
which implies $\mathcal{J}_{\infty}(u_{n})-\mathcal{J}(u_{n})\rightarrow0$.

In a similar manner we can prove%
\[
\left\Vert \mathcal{J}_{\infty}^{\prime}( u_{n}) -\mathcal{J}^{\prime}( u_{n})
\right\Vert =\sup_{h\in X,\left\Vert h\right\Vert =1}\left\vert \int%
_{\mathbb{R}^{N}}( V_{\infty}-V( x) ) u_{n}h\right\vert \rightarrow0\text{.}%
\]
Thus $\mathcal{J}_{\infty}( u_{n}) \rightarrow c$ and $\mathcal{J}_{\infty
}^{\prime}( u_{n}) \rightarrow0$.
\end{proof}

Considering the constant potential $V_{\infty}$ as a $\mathbb{Z}^{N}$-periodic
function, it has been shown in the proof of \cite[Theorem 1.8 (b)]{MR3328350}
that for $\left\{  u_{n}\right\}  $, the bounded $( PS) _{c}$ sequence of
$\mathcal{J}_{\infty}$ obtained in Lemma \ref{l2}, there exists a sequence
$\left\{  y_{n}\right\}  \subset\mathbb{R}^{N}$ such that setting $v_{n}( x)
=u_{n}(x-y_{n})$ for $x\in\mathbb{R}^{N}$, then $v_{n}\rightharpoonup v$ in
$X$, and $v$ is a nonzero critical point of $\mathcal{J}_{\infty}$.

We claim that $\mathcal{J}_{\infty}(v)\leq c$. In addition to the
obvious fact that $v_{n}\rightarrow v$ a.e.\ in $\mathbb{R}^{N}$, by applying
Lemma \ref{l1} to $\mathcal{J}_{\infty}$ we also have $\nabla v_{n}%
\rightarrow\nabla v$ a.e.\ in $\mathbb{R}^{N}$. By the assumptions $(\phi_{2}%
)$, $(f_{2})$, and $\theta>m$, we have
\begin{equation}
\Phi(\left\vert t\right\vert )-\frac{1}{\theta}\phi(\left\vert t\right\vert
)t^{2}\geq\left(  1-\frac{m}{\theta}\right)  \Phi(\left\vert t\right\vert
)\geq0\text{,\qquad}\frac{1}{\theta}f(t)t-F(t)\geq0 \label{e99}%
\end{equation}
for $t\ge0$. Hence, we may apply the Fatou lemma to get%
\begin{align}
c  &  =\lim_{n\rightarrow\infty}\left\{  \mathcal{J}_{\infty}(v_{n})-\frac
{1}{\theta}\left\langle \mathcal{J}_{\infty}^{\prime}(v_{n}),v_{n}%
\right\rangle \right\} \nonumber\\
&  =\varliminf_{n\rightarrow\infty}\left\{  \int_{\mathbb{R}^{N}}\left(
\Phi(\left\vert \nabla v_{n}\right\vert )-\frac{1}{\theta}\phi(\left\vert
\nabla v_{n}\right\vert )\left\vert \nabla v_{n}\right\vert ^{2}\right)
\right. \nonumber\\
&  \qquad\qquad\left.  +\int_{\mathbb{R}^{N}}V_\infty\left(  \Phi(\left\vert
v_{n}\right\vert )-\frac{1}{\theta}\phi(\left\vert v_{n}\right\vert
)\left\vert v_{n}\right\vert ^{2}\right)  +\int_{\mathbb{R}^{N}}\left(
\frac{1}{\theta}f(v_{n})v_{n}-F(v_{n})\right)  \right\} \nonumber\\
&  \geq\int_{\mathbb{R}^{N}}\left(  \Phi(\left\vert \nabla v\right\vert
)-\frac{1}{\theta}\phi(\left\vert \nabla v\right\vert )\left\vert \nabla
v\right\vert ^{2}\right) \nonumber\\
&  \qquad\qquad+\int_{\mathbb{R}^{N}}V_\infty\left(  \Phi(\left\vert v\right\vert
)-\frac{1}{\theta}\phi(\left\vert v\right\vert )\left\vert v\right\vert
^{2}\right)  +\int_{\mathbb{R}^{N}}\left(  \frac{1}{\theta}f(v)v-F(v)\right)
\nonumber\\
&  =\mathcal{J}_{\infty}(v)-\frac{1}{\theta}\left\langle \mathcal{J}_{\infty
}^{\prime}(v),v\right\rangle =\mathcal{J}_{\infty}(v)\text{.} \label{e9}%
\end{align}

Define a $C^{1}$-function $\varrho:[0,\infty)\rightarrow\mathbb{R}$ by%
\[
\varrho(t)=\mathcal{J}_{\infty}(tv)=\int_{\mathbb{R}^{N}}\Phi(t\left\vert
\nabla v\right\vert )+\int_{\mathbb{R}^{N}}V_{\infty}\Phi(t\left\vert
v\right\vert )-\int_{\mathbb{R}^{N}}F(tv)\text{.}%
\]
Then for $t>0$,%
\begin{align*}
\varrho^{\prime}(t)  &  =\left\langle \mathcal{J}_{\infty}^{\prime
}(tv),v\right\rangle \\
&  =t\int_{\mathbb{R}^{N}}\left(  \phi(t\left\vert \nabla v\right\vert
)\left\vert \nabla v\right\vert ^{2}+V_{\infty}\phi(t\left\vert v\right\vert
)v^{2}\right)  -\int_{\mathbb{R}^{N}}f(tv)v\text{.}%
\end{align*}
Hence, for the $s\geq2$ in assumptions $(\phi_{3}^{s})$ and $(f_{3}^{s})$ we
have
\begin{align*}
\varrho^{\prime}(t)  &  >0\quad\Longleftrightarrow\quad\frac{1}{t^{s-2}}\int%
_{\mathbb{R}^{N}}\left(  \phi(t\left\vert \nabla u\right\vert )\left\vert
\nabla u\right\vert ^{2}+V_{\infty}\phi(t\left\vert v\right\vert
)v^{2}\right)  >\int_{\mathbb{R}^{N}}\frac{f(tv)v}{t^{s-1}}\text{,}\\
\varrho^{\prime}(t)  &  =0\quad\Longleftrightarrow\quad\frac{1}{t^{s-2}}\int%
_{\mathbb{R}^{N}}\left(  \phi(t\left\vert \nabla u\right\vert )\left\vert
\nabla u\right\vert ^{2}+V_{\infty}\phi(t\left\vert v\right\vert
)v^{2}\right)  =\int_{\mathbb{R}^{N}}\frac{f(tv)v}{t^{s-1}}\text{,}\\
\varrho^{\prime}(t)  &  <0\quad\Longleftrightarrow\quad\frac{1}{t^{s-2}}\int%
_{\mathbb{R}^{N}}\left(  \phi(t\left\vert \nabla u\right\vert )\left\vert
\nabla u\right\vert ^{2}+V_{\infty}\phi(t\left\vert v\right\vert
)v^{2}\right)  <\int_{\mathbb{R}^{N}}\frac{f(tv)v}{t^{s-1}}\text{.}%
\end{align*}
Since $\varrho^{\prime}(1)=\left\langle \mathcal{J}_{\infty}^{\prime
}(v),v\right\rangle =0$, by $(\phi_{3}^{s})$ and $(f_{3}^{s})$ and a
monotonicity argument we see that
\[
\varrho^{\prime}(t)>0\text{\quad\ for }t\in(0,1)\text{,\qquad}\varrho^{\prime
}(t)<0\text{\quad\ for }t\in(1,\infty)\text{.}%
\]
Hence,%
\begin{equation}
\mathcal{J}_{\infty}(v)=\varrho(1)=\max_{t\geq0}\varrho(t)=\max_{t\geq
0}\mathcal{J}_{\infty}(tv)\text{.} \label{e11}%
\end{equation}

Now, we are ready to conclude the proof of Theorem \ref{t2}. For the bounded
$( PS) _{c}$ sequence $\left\{  u_{n}\right\}  $ given in (\ref{e10}), we
known that the weak limit $u$ of a subsequence is a critical point of
$\mathcal{J}$. If $u=0$, by Lemma \ref{l2}, this $\left\{  u_{n}\right\}  $ is
also a $( PS) _{c}$ sequence of the limiting functional $\mathcal{J}_{\infty}%
$, which will produce a nonzero critical point $v$ of $\mathcal{J}_{\infty}$
satisfying $\mathcal{J}_{\infty}( v) \leq c$, see (\ref{e9}).

We also know that $\mathcal{J}_{\infty}(tv)\rightarrow-\infty$ as
$t\rightarrow+\infty$, see Remark \ref{rr0}. Choose $T>0$ such that
$\mathcal{J}_{\infty}(Tv)<0$ and define $\gamma:\left[  0,1\right]  \rightarrow
X$, $\gamma(t)=tTv$. Then $\gamma(0)=0$,
\[
\mathcal{J}(\gamma(1))<\mathcal{J}_{\infty}(\gamma(1))=\mathcal{J}_{\infty
}(Tv)<0
\]
because $V(x)<V_\infty$ for all $x\in\mathbb{R}^N$, hence $\gamma\in\Gamma$.

By assumption $(V_{2})$, (\ref{e9}) and (\ref{e11}) we see that for $t\in(0,1]$,%
\begin{equation}
\mathcal{J}(\gamma(t))<\mathcal{J}_{\infty}(\gamma(t))\leq\mathcal{J}_{\infty
}(v)\leq c\text{.} \label{ew0}%
\end{equation}
Because $\gamma(0)=0$, $\mathcal{J}(0)=0$ and $c>0$, (\ref{ew0}) is also true
at $t=0$. Hence
\[
\max_{t\in\left[  0,1\right]  }\mathcal{J}(\gamma(t))=\mathcal{J}(\gamma(t_0))<\mathcal{J}_{\infty
}(v)\leq c
\]
for some $t_0\in[0,1]$, contradicting the definition of $c$ given in (\ref{e}). Therefore $u\neq0$
and it is a nonzero critical point of $\mathcal{J}$.

\subsection{Proof of Theorem \ref{t3}}

On the subspace
\[
E=\left\{  u\in W^{1,\Phi}(\mathbb{R}^{N})\left\vert \,\int_{\mathbb{R}^{N}%
}a(x)\Phi(\left\vert u\right\vert )<\infty\right.  \right\} 
\]
of $W^{1,\Phi}(\mathbb{R}^{N})$, we equip the norm%
\[
\left\Vert u\right\Vert =\left\vert \nabla u\right\vert _{\Phi}+\left\vert
u\right\vert _{\Phi,(a+1)}\text{.}%
\]
Then $E$ becomes a Banach space. To prove Theorem \ref{t3} we only need to
find nonzero critical points of $\mathcal{J}_{\lambda}:E\rightarrow\mathbb{R}%
$,%
\[
\mathcal{J}_{\lambda}(u)=\int_{\mathbb{R}^{N}}\Phi(\left\vert \nabla
u\right\vert )+\int_{\mathbb{R}^{N}}(\lambda a(x)+1)\Phi(\left\vert
u\right\vert )-\int_{\mathbb{R}^{N}}F(u)\text{.}%
\]
As before, $\mathcal{J}_{\lambda}$ verifies the assumptions of the mountain pass theorem thus has
a $(PS)_{c_{\lambda}}$ sequence $\big\{u_{n}^{\lambda}\big\}_{n=1}^\infty$ satisfying%
\begin{equation}
\mathcal{J}_{\lambda}(u_{n}^{\lambda})\rightarrow c_{\lambda}>0\text{,\qquad
}\mathcal{J}_{\lambda}^{\prime}(u_{n}^{\lambda})\rightarrow0\text{,}
\label{e13}%
\end{equation}
where%
\begin{equation}
c_{\lambda}=\inf_{\gamma\in\Gamma_{\lambda}}\max_{t\in\left[  0,1\right]
}\mathcal{J}_{\lambda}(\gamma(t)) \label{ep4}%
\end{equation}
being $\Gamma_{\lambda}=\left\{  \left.  \gamma\in C(I,E)\right\vert
\,\gamma(0)=0\text{, }\mathcal{J}_{\lambda}(\gamma(1))<0\right\}  $.

Moreover, the sequence $\big\{u_{n}^{\lambda}\big\}_{n=1}^\infty$ is bounded in $E$. Going to
a subsequence if necessary, $u_{n}^{\lambda}\rightharpoonup u^{\lambda}$ in $E$, and
$u^{\lambda}$ is a critical point of $\mathcal{J}_{\lambda}$ due to Lemma \ref{l1}. We need to show
that $u^{\lambda}\neq0$. Although the basic idea can be traced back to
\cite[\S 5]{MR1349229}, we need to create the required estimates more
carefully because our differential operator $-\Delta_{\Phi}$ is much more
complicated than in \cite{MR1349229}.

Take an $\mathcal{N}$-function satisfying (\ref{e2}). From $\left(
f_{1}\right)  $ with $\lambda_{0}=0$, for any $\varepsilon>0$, there exists
$C_{\varepsilon}>0$ such that%
\begin{equation}
\frac{1}{\ell}f(t)t-F(t)\leq\varepsilon\left(  \Phi(\left\vert t\right\vert)
+\Phi_{\ast}(\left\vert t\right\vert )\right)  +C_{\varepsilon}\Psi(\left\vert
t\right\vert )\text{.}\label{ufo}%
\end{equation}
By assumption $(a_{1})$, we can take $v\in E\backslash\left\{  0\right\}  $,
such that $\operatorname*{supp}v$ is contained in the interior of $a^{-1}(0)$.
Then, by the mountain pass characterization of $c_{\lambda}$ in (\ref{ep4}),
we have
\begin{align}
c_{\lambda} &  \leq\max_{t\geq0}\mathcal{J}_{\lambda}(tv)\nonumber\\
&  =\max_{t\geq0}\left\{  \int\Phi(t\left\vert \nabla v\right\vert )+\int
\Phi(t\left\vert v\right\vert )-\int F(tv)\right\}  =\tilde{c}<+\infty
\text{,}\label{e17}%
\end{align}
see the proof of \cite[Lemma 4.1 (b)]{MR3328350} for more details.

\begin{lemma}
\label{l3}There exists $\alpha>0$ such that for all $\lambda\geq1$,%
\[
\varliminf_{n\rightarrow\infty}\int_{\mathbb{R}^{N}}\Psi( \vert u_{n}%
^{\lambda}\vert) \geq\alpha\text{.}%
\]

\end{lemma}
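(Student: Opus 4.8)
The plan is to argue by contradiction using the $(PS)_{c_\lambda}$ property together with the uniform upper bound $c_\lambda\le\tilde c$ from \eqref{e17}. Suppose the conclusion fails for some $\lambda\ge1$, so that along a subsequence (not relabelled) we have $\int_{\mathbb{R}^N}\Psi(|u_n^\lambda|)\to 0$, or more precisely $\varliminf_n\int_{\mathbb{R}^N}\Psi(|u_n^\lambda|)$ is smaller than any prescribed $\alpha>0$; we will show this forces $c_\lambda=0$, contradicting $c_\lambda>0$. First I would combine the two relations in \eqref{e13} in the familiar way: since $\langle\mathcal{J}_\lambda'(u_n^\lambda),u_n^\lambda\rangle\to 0$ and $\mathcal{J}_\lambda(u_n^\lambda)\to c_\lambda$, we get
\[
c_\lambda=\lim_{n\to\infty}\left\{\mathcal{J}_\lambda(u_n^\lambda)-\frac{1}{\ell}\langle\mathcal{J}_\lambda'(u_n^\lambda),u_n^\lambda\rangle\right\}.
\]
Expanding the right-hand side and using $(\phi_2)$ (namely $\phi(|t|)t^2\le m\,\Phi(|t|)$, hence $\Phi(|t|)-\frac{1}{\ell}\phi(|t|)t^2\ge 0$ since $\ell\le m$... wait, we need the sign the other way for a lower bound; instead use $\ell\le \phi(|t|)t^2/\Phi(|t|)$, so $\Phi(|t|)-\frac1\ell\phi(|t|)t^2\le 0$, giving an \emph{upper} bound). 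The point is that the gradient and potential terms contribute something $\le 0$, so
\[
c_\lambda\le\varlimsup_{n\to\infty}\int_{\mathbb{R}^N}\left(\frac{1}{\ell}f(u_n^\lambda)u_n^\lambda-F(u_n^\lambda)\right).
\]

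Next I would estimate the right-hand side using \eqref{ufo}: for any $\varepsilon>0$,
\[
\int_{\mathbb{R}^N}\left(\frac1\ell f(u_n^\lambda)u_n^\lambda-F(u_n^\lambda)\right)\le\varepsilon\int_{\mathbb{R}^N}\Big(\Phi(|u_n^\lambda|)+\Phi_\ast(|u_n^\lambda|)\Big)+C_\varepsilon\int_{\mathbb{R}^N}\Psi(|u_n^\lambda|).
\]
Here the boundedness of $\{u_n^\lambda\}$ in $E$ (hence in $W^{1,\Phi}(\mathbb{R}^N)$, hence in $L^\Phi$ and in $L^{\Phi_\ast}$ by the continuous embedding) gives, via Lemma \ref{l5} and the analogous inequality for $\Phi_\ast$, a uniform bound $\int_{\mathbb{R}^N}(\Phi(|u_n^\lambda|)+\Phi_\ast(|u_n^\lambda|))\le C$ with $C$ independent of $n$ (and, crucially, independent of $\lambda\ge1$, since the $L^\Phi$ and $L^{\Phi_\ast}$ norms are controlled by $\|u_n^\lambda\|$ and $\|u_n^\lambda\|\le$ const uniformly — this uniformity is where I would be careful, using $c_\lambda\le\tilde c$ to get a $\lambda$-independent bound on $\|u_n^\lambda\|$ from the coercivity estimate in the boundedness argument of \cite[Lemma 4.2]{MR3328350}). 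Passing to $\varliminf$ in $n$ and using the contradiction hypothesis $\varliminf_n\int\Psi(|u_n^\lambda|)<\alpha$, we obtain $c_\lambda\le C\varepsilon+C_\varepsilon\alpha$.

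The last step is to choose the constants correctly. Given $\varepsilon>0$, the constant $C_\varepsilon$ in \eqref{ufo} is then fixed; if the lemma's conclusion failed we could make $\alpha$ arbitrarily small, so choosing first $\varepsilon$ small and then $\alpha$ small relative to $C_\varepsilon$ yields $c_\lambda\le$ any positive number, i.e.\ $c_\lambda\le 0$. This contradicts $c_\lambda\ge\eta>0$. To turn this into the stated form — a single $\alpha>0$ working for \emph{all} $\lambda\ge1$ — I would run the argument quantitatively: fix $\varepsilon=\tfrac{\eta}{4C}$ (with $C$ the uniform bound above and $\eta>0$ the mountain-pass level, which one checks is bounded below uniformly in $\lambda\ge1$ since increasing $\lambda$ only increases $\mathcal{J}_\lambda$ on spheres... actually $\eta$ is $\lambda$-independent because the sphere $\|u\|=\rho$ estimate uses only $\Phi$, $\Phi_\ast$ and the $a+1\ge1$ part), then set $\alpha=\tfrac{\eta}{4C_\varepsilon}$; the displayed chain gives $\eta\le c_\lambda\le\tfrac\eta4+\tfrac\eta4=\tfrac\eta2$, a contradiction unless $\varliminf_n\int\Psi(|u_n^\lambda|)\ge\alpha$. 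The main obstacle is precisely the \emph{uniformity in $\lambda$}: one must verify that the $E$-bound on the $(PS)_{c_\lambda}$ sequence, the continuous-embedding constants into $L^\Phi$ and $L^{\Phi_\ast}$, and the mountain-pass threshold $\eta$ can all be taken independent of $\lambda\ge1$; each of these follows because enlarging $\lambda$ only strengthens the potential term $(\lambda a(x)+1)\Phi(|u|)\ge\Phi(|u|)$, but it has to be tracked explicitly through the estimates borrowed from \cite{MR3328350}.
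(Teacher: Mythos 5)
Your proposal is correct and follows essentially the same route as the paper: the key combination $\mathcal{J}_\lambda(u_n^\lambda)-\frac{1}{\ell}\langle\mathcal{J}_\lambda'(u_n^\lambda),u_n^\lambda\rangle$ together with $(\phi_2)$ and \eqref{ufo}, the $\lambda$-uniform bound on $\int(\Phi+\Phi_\ast)(|u_n^\lambda|)$ coming from $c_\lambda\le\tilde c$, and a $\lambda$-uniform positive lower bound on $c_\lambda$ (the paper uses $c_\lambda\ge c_0>0$ via $\Gamma_\lambda\subset\Gamma_0$, which is the same observation as your $\lambda$-independent $\eta$). The contradiction framing is cosmetic; your final quantitative choice of $\varepsilon$ and $\alpha$ is exactly the paper's conclusion from \eqref{e18}.
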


\begin{proof}
It has been shown in \cite[Lemma 4.1 (a)]{MR3328350} that $u=0$ is a strict
local minimizer of $\mathcal{J}_{0}$. Since for $\lambda>0$ we have
$\mathcal{J}_{\lambda}\geq\mathcal{J}_{0}$, it follows that $\Gamma_{\lambda
}\subset\Gamma_{0}$. By (\ref{ep4}), it is easy to see that $c_{\lambda}\geq
c_{0}>0$.

For simplicity of notation, in the proof of Lemmas \ref{l3} and \ref{l4} we drop the superscript $\lambda$ and
write $u_{n}$ for $u_{n}^{\lambda}$. From (\ref{e13}), using (\ref{e99}) we have (note that $\lambda\geq1$)%
\begin{align}
c_{\lambda} &  =\lim_{n\rightarrow\infty}\left\{  \mathcal{J}_{\lambda}%
(u_{n})-\frac{1}{\theta}\left\langle \mathcal{J}_{\lambda}^{\prime}%
(u_{n}),u_{n}\right\rangle \right\}  \nonumber\\
&  \geq\varlimsup_{n\rightarrow\infty}\left\{  \int_{\mathbb{R}^{N}}\left(
\Phi(\left\vert \nabla u_{n}\right\vert )-\frac{1}{\theta}\phi(\left\vert
\nabla u_{n}\right\vert )\left\vert \nabla u_{n}\right\vert ^{2}\right)
\right.  \nonumber\\
&  \qquad\qquad\left.  +\int_{\mathbb{R}^{N}}(\lambda a(x)+1)\left(
\Phi(\left\vert u_{n}\right\vert )-\frac{1}{\theta}\phi(\left\vert
u_{n}\right\vert )\left\vert u_{n}\right\vert ^{2}\right)  \right\}
\nonumber\\
&  \geq\left(  1-\frac{m}{\theta}\right)  \varlimsup_{n\rightarrow\infty
}\left\{  \int_{\mathbb{R}^{N}}\Phi(\left\vert \nabla u_{n}\right\vert
)+\int_{\mathbb{R}^{N}}(\lambda a(x)+1)\Phi(\left\vert u_{n}\right\vert
)\right\}  \text{.}\label{uof}%
\end{align}
Since the first integral in the last line is nonnegative, it follows that%
\begin{equation}
\varlimsup_{n\rightarrow\infty}\int_{\mathbb{R}^{N}}(\lambda a(x)+1)\Phi
(\left\vert u_{n}\right\vert )\leq\frac{\theta m}{\theta-m}c_{\lambda}%
\text{.}\label{e16}%
\end{equation}
Moreover, as indicated in (\ref{e17}), $\left\{  c_{\lambda}\right\}
_{\lambda\geq1}$ is bounded above by $\tilde{c}$, it follows from (\ref{uof})
that $\left\{  u_{n}\right\}  $ is bounded in $E$ by a positive constant, which is
independent of $\lambda$. Therefore, by the continuous embedding $E\hookrightarrow L^{\Phi_*}$, there exists $d>0$ such that%
\begin{equation}
\int_{\mathbb{R}^{N}}\Phi_{\ast}(\left\vert u_{n}\right\vert )\leq
d\text{.}\label{eud}%
\end{equation}

Using $(\phi_{2})$ we have $\Phi(t)\leq\ell^{-1}\phi(t)t^{2}$ for $t\geq0$,
then using (\ref{ufo}), (\ref{eud}) and (\ref{e16}) we get
\begin{align}
c_{\lambda} &  =\lim_{n\rightarrow\infty}\left\{  \mathcal{J}_{\lambda}%
(u_{n})-\frac{1}{\ell}\left\langle \mathcal{J}_{\lambda}^{\prime}(u_{n}%
),u_{n}\right\rangle \right\}  \nonumber\\
&  =\lim_{n\rightarrow\infty}\left\{  \int_{\mathbb{R}^{N}}\left(
\Phi(\left\vert \nabla u_{n}\right\vert )-\frac{1}{\ell}\phi(\left\vert \nabla
u_{n}\right\vert )\left\vert \nabla u_{n}\right\vert ^{2}\right)  \right.
\nonumber\\
&  \qquad\quad\left.  +\int_{\mathbb{R}^{N}}(\lambda a(x)+1)\left(
\Phi(\left\vert u_{n}\right\vert )-\frac{1}{\ell}\phi(\left\vert
u_{n}\right\vert )u_{n}^{2}\right)  +\int_{\mathbb{R}^{N}}\left(  \frac
{1}{\ell}f(u_{n})u_{n}-F(u_{n})\right)  \right\}  \nonumber\\
&  \leq\varliminf_{n\rightarrow\infty}\int_{\mathbb{R}^{N}}\left(  \frac
{1}{\ell}f(u_{n})u_{n}-F(u_{n})\right)  \nonumber\\
&  \leq\varliminf_{n\rightarrow\infty}\left(  \varepsilon\left(
\int_{\mathbb{R}^{N}}\Phi(\left\vert u_{n}\right\vert )+\int_{\mathbb{R}^{N}%
}\Phi_{\ast}(\left\vert u_{n}\right\vert )\right)  +C_{\varepsilon}%
\int_{\mathbb{R}^{N}}\Psi(\left\vert u_{n}\right\vert )\right)  \nonumber\\
&  \leq\varepsilon\varlimsup_{n\rightarrow\infty}\left(  d+\int_{\mathbb{R}%
^{N}}(\lambda a(x)+1)\Phi(\left\vert u_{n}\right\vert )\right)  +C_{\varepsilon}%
\varliminf_{n\rightarrow\infty}\int_{\mathbb{R}^{N}}\Psi(\left\vert
u_{n}\right\vert )\nonumber\\
&  \leq\varepsilon d+\frac{\theta m}{\theta-m}c_{\lambda}\varepsilon
+C_{\varepsilon}\varliminf_{n\rightarrow\infty}\int_{\mathbb{R}^{N}}%
\Psi(\left\vert u_{n}\right\vert )\text{.}\label{e18}%
\end{align}
Noting that $c_{\lambda}\geq c_{0}>0$, choosing $\varepsilon$ small enough at
the very beginning, the desired conclusion follows from (\ref{e18}).
\end{proof}

\begin{remark}\label{r26}
Comparing with the argument in \cite{MR1349229} for the case $\phi(t)\equiv1$, because instead of being \emph{strictly subcritical} (meaning that $f(t)$ is controlled by some subcritical $\mathcal{N}$-function $\Psi$), our nonlinearity $f(t)$ is only \emph{asymptotically subcritical}, in the estimate (\ref{e18}) the term involving $\Phi_*$ presents. Hence we need the uniform bound (\ref{eud}), which in turn is ensured by the upper bound of $\{c_\lambda\}$ given in (\ref{e17}).
\end{remark}

\begin{lemma}
\label{l4}For any $\varepsilon>0$, there exists $\lambda^{\ast}\ge1$ and $R>0$,
such that for $\lambda\geq\lambda^{\ast}$ we have%
\[
\varlimsup_{n\rightarrow\infty}\int_{\left\vert x\right\vert \geq R}\Psi(
\vert u_{n}^{\lambda}\vert) <\varepsilon\text{.}%
\]

\end{lemma}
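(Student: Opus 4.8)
The plan is to decompose the tail region $\{|x|\geq R\}$ according to the threshold $M_0$ from $(a_2)$ and to combine this with the subcritical splitting of $\Psi$ already used in the proof of Lemma \ref{l0}. Set $\Omega_0=a^{-1}(-\infty,M_0]$; by $(a_2)$ we have $\mu(\Omega_0)<\infty$, hence $\mu(\Omega_0\cap\{|x|\geq R\})\to0$ as $R\to\infty$. Throughout I abbreviate $u_n=u_n^{\lambda}$ and use that, by the proof of Lemma \ref{l3}, $\{u_n^{\lambda}\}$ is bounded in $E$ and satisfies the uniform bound \eqref{eud} in $L^{\Phi_{\ast}}$, both independently of $\lambda\geq1$, and that $c_{\lambda}\leq\tilde c$ for all $\lambda\geq1$ by \eqref{e17}.

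Fix $\varepsilon>0$. First, by \eqref{e2} choose $k>0$ with $\Psi(t)\leq\varepsilon_1\Phi_{\ast}(t)$ for $|t|>k$, where $\varepsilon_1>0$ is so small that $\varepsilon_1 d<\varepsilon/3$; then $\int_{\{|u_n|>k\}}\Psi(|u_n|)\leq\varepsilon_1\int_{\mathbb{R}^N}\Phi_{\ast}(|u_n|)\leq\varepsilon_1 d<\varepsilon/3$ for every $n$. For this $k$ there is $\kappa>0$ with $\Psi(t)\leq\kappa\Phi(t)$ on $[0,k]$, and $\Psi_k:=\sup_{[0,k]}\Psi<\infty$ by continuity. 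On the set $\{|u_n|\leq k\}\cap\{a\geq M_0\}$, where $\lambda a(x)\geq\lambda M_0$, I would use $\Psi\leq\kappa\Phi$ together with \eqref{e16} and $c_{\lambda}\leq\tilde c$ to obtain
\begin{align*}
&\varlimsup_{n\to\infty}\int_{\{|u_n|\leq k\}\cap\{a\geq M_0\}}\Psi(|u_n|)\\
&\qquad\leq\frac{\kappa}{\lambda M_0}\,\varlimsup_{n\to\infty}\int_{\mathbb{R}^N}(\lambda a(x)+1)\Phi(|u_n|)\leq\frac{\kappa}{\lambda M_0}\cdot\frac{\theta m}{\theta-m}\,\tilde c,
\end{align*}
which is $<\varepsilon/3$ once $\lambda\geq\lambda^{\ast}$, with $\lambda^{\ast}\geq1$ depending only on $\varepsilon,\kappa,M_0,\tilde c$ and not on $R$. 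Finally, since $\{|u_n|\leq k\}\cap\{a<M_0\}\cap\{|x|\geq R\}\subset\Omega_0\cap\{|x|\geq R\}$ and $\Psi(|u_n|)\leq\Psi_k$ on this set, I would pick $R>0$ (independent of $\lambda$) so that $\mu(\Omega_0\cap\{|x|\geq R\})\,\Psi_k<\varepsilon/3$. Decomposing $\{|x|\geq R\}$ into $\{|u_n|>k\}$, $\{|u_n|\leq k,\,a\geq M_0\}$ and $\{|u_n|\leq k,\,a<M_0\}$ and passing to $\varlimsup_n$ then yields $\varlimsup_n\int_{|x|\geq R}\Psi(|u_n|)<\varepsilon$.

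The heart of the argument is the trade-off $\lambda a(x)\geq\lambda M_0$ on $\{a\geq M_0\}$, which converts largeness of $\lambda$ into smallness of the $\Phi$-mass there, against $\mu(\Omega_0)<\infty$, which converts largeness of $R$ into smallness on $\{a<M_0\}$; the high-amplitude part $\{|u_n|>k\}$ is absorbed once and for all by the uniform $L^{\Phi_{\ast}}$-bound. The step requiring genuine care is the ordering of the choices: $k,\kappa,\Psi_k$ are determined by $\varepsilon$ alone, then $\lambda^{\ast}$ is fixed (its bound contains no $R$), and only afterwards is $R$ chosen, uniformly in $\lambda\geq\lambda^{\ast}$. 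It is precisely here that the uniform-in-$\lambda$ nature of \eqref{eud} and of the bound $\tilde c$ on $\{c_{\lambda}\}_{\lambda\geq1}$ — established in the proof of Lemma \ref{l3} and in \eqref{e17} — is indispensable, cf.\ Remark \ref{r26}.
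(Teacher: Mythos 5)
Your proof is correct and follows essentially the same route as the paper's: the same three-way splitting of the tail into the high-amplitude set $\{|u_n|>k\}$ (absorbed by the uniform $L^{\Phi_{\ast}}$ bound \eqref{eud}), the set $\{a\geq M_0\}$ (controlled via $\Psi\leq\kappa\Phi$, the weight $\lambda a+1\geq\lambda M_0$, \eqref{e16} and $c_{\lambda}\leq\tilde c$, for $\lambda$ large), and the finite-measure set $\{a<M_0,\ |x|\geq R\}$ (controlled by $(a_2)$ for $R$ large). Your explicit bookkeeping of the order of choices and the $\lambda$-independence of $R$ matches the paper's intent exactly; the only differences are cosmetic ($\varepsilon/3$ versus $\varepsilon/4$, and a slightly cleaner handling of the $\varlimsup$ in the middle estimate).
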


\begin{proof}
For $R>0$, we set%
\begin{align*}
A_{R}  &  =\left\{  \left.  x\in\mathbb{R}^{N}\right\vert \,\left\vert
x\right\vert \geq R,a(x)\geq M_{0}\right\}  \text{,}\\
B_{R}  &  =\left\{  \left.  x\in\mathbb{R}^{N}\right\vert \,\left\vert
x\right\vert \geq R,a(x)<M_{0}\right\}  \text{.}%
\end{align*}
As in (\ref{ex}), because of (\ref{e2}) there exists $k>0$ such that%
\begin{equation}
\int_{\left\vert u_{n}\right\vert >k}\Psi(\left\vert u_{n}\right\vert
)\leq\frac{\varepsilon}{4}\text{.} \label{e19}%
\end{equation}
Using assumption $(a_{2})$, as $R\rightarrow\infty$ we have $\mu
(B_{R})\rightarrow0$, therefore we can fix $R>0$ such that%
\begin{equation}
\int_{B_{R}\cap\left\{  \left\vert u_{n}\right\vert \leq k\right\}  }%
\Psi(\left\vert u_{n}\right\vert )\leq\mu(B_{R})\cdot\sup_{t\in\left[  0,k\right]  }%
\Psi(t)<\frac{\varepsilon}{4}\text{.} \label{e20}%
\end{equation}
By the first limit from (\ref{e2}), there is $\kappa>0$ such that $\Psi(t)\leq\kappa\Phi(t)$ for
$t\in\left[  0,k\right]  $. Thus using
(\ref{e16}) and (\ref{e17}) we see that if $\lambda$ is large enough,
\begin{align}
\int_{A_{R}\cap\left\{  \left\vert u_{n}\right\vert \leq k\right\}  }%
\Psi(\left\vert u_{n}\right\vert )  &  \leq\kappa\int_{A_{R}}\Phi(\left\vert
u_{n}\right\vert )\nonumber\\
&  \leq\frac{\kappa}{\lambda M_{0}+1}\int_{A_{R}}(\lambda a(x)+1)\Phi
(\left\vert u_{n}\right\vert )\nonumber\\
&  \leq\frac{\kappa}{\lambda M_{0}+1}\frac{2\theta m}{\theta-m}c_{\lambda}%
\leq\frac{\kappa}{\lambda M_{0}+1}\frac{2\theta m}{\theta-m}\tilde{c}%
<\frac{\varepsilon}{4}\text{.} \label{e21}%
\end{align}
For such large $\lambda$, combining (\ref{e19}), (\ref{e20}) and (\ref{e21}),
we see that for the chosen $R>0$,%
\[
\varlimsup_{n\rightarrow\infty}\int_{\left\vert x\right\vert \geq R}%
\Psi(|u_{n}|)\leq\varlimsup_{n\rightarrow\infty}\left(  \int_{\left\vert
u_{n}\right\vert >k}+\int_{B_{R}\cap\left\{  \left\vert u_{n}\right\vert \leq
k\right\}  }+\int_{A_{R}\cap\left\{  \left\vert u_{n}\right\vert \leq
k\right\}  }\right)  \Psi(\left\vert u_{n}\right\vert )<\varepsilon\text{.}
\]

\end{proof}

Having proven Lemmas \ref{l3} and \ref{l4}, we are ready to complete the proof
of Theorem \ref{t3}. Set $\varepsilon=\alpha/2$ in Lemma \ref{l4} and fix
$\lambda^{\ast}>0$ and $R>0$ as in the lemma. If $\lambda\geq\lambda^{\ast}$,
for $\big\{u_{n}^{\lambda}\big\}$, the $(PS)_{c_{\lambda}}$ sequence of
$\mathcal{J}_{\lambda}$, we have $u_{n}^{\lambda}\rightharpoonup u^{\lambda}$
and $u^{\lambda}$ is a critical point of $\mathcal{J}_{\lambda}$. Since the
embedding $E\hookrightarrow L^{\Psi}(B_{R})$ is compact,%
\begin{align*}
\int_{\left\vert x\right\vert <R}\Psi(|u^{\lambda}|)  &  =\lim_{n\rightarrow
\infty}\int_{\left\vert x\right\vert <R}\Psi(|u_{n}^{\lambda}|)\\
&  \geq\varliminf_{n\rightarrow\infty}\int_{\mathbb{R}^{N}}\Psi(|u_{n}%
^{\lambda}|)-\varlimsup_{n\rightarrow\infty}\int_{\left\vert x\right\vert \geq
R}\Psi(|u_{n}^{\lambda}|)\geq\frac{\varepsilon}{2}\text{.}%
\end{align*}
Therefore, $u^{\lambda}$ is a nonzero critical point of $\mathcal{J}_{\lambda
}$.

\section{Multiple solutions}

\begin{lemma}\label{lm4}
Under the assumptions of Theorem \ref{t1}, $\mathcal{J}$ satisfies the $( PS)
$ condition, that is, for any $c\in\mathbb{R}$, all $( PS) _{c}$ sequence of
$\mathcal{J}$ has convergent subsequence.
\end{lemma}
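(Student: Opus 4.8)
The plan is to verify the Palais--Smale condition for $\mathcal{J}$ under the hypotheses of Theorem \ref{t1} in two stages: first establish boundedness of an arbitrary $(PS)_c$ sequence, then upgrade weak convergence to strong convergence using the compact embedding of Lemma \ref{l0}. Let $\{u_n\}\subset X$ satisfy $\mathcal{J}(u_n)\to c$ and $\mathcal{J}'(u_n)\to 0$. For boundedness, I would combine $\mathcal{J}(u_n)-\frac{1}{\theta}\langle\mathcal{J}'(u_n),u_n\rangle = c + o(1) + o(1)\|u_n\|$ with the pointwise inequalities in \eqref{e99} coming from $(\phi_2)$, $(f_2)$ and $\theta>m$, exactly as in the estimate \eqref{uof} used in the proof of Lemma \ref{l3}. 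This yields
\[
\Big(1-\frac{m}{\theta}\Big)\Big(\int_{\mathbb{R}^N}\Phi(|\nabla u_n|)+\int_{\mathbb{R}^N}V(x)\Phi(|u_n|)\Big)\leq c+o(1)+o(1)\|u_n\|,
\]
and then Lemma \ref{l5} (together with the analogous inequality for $|\nabla u_n|_\Phi$) converts the left-hand side into a function of $\|u_n\|$ that grows at least like $\xi_0(\|u_n\|/2)$, i.e. like $\min\{t^\ell,t^m\}$ with $\ell,m>1$; since this superlinear lower bound cannot be dominated by $c+o(1)\|u_n\|$, the sequence $\{u_n\}$ is bounded in $X$.

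Next, since $X$ is reflexive, pass to a subsequence with $u_n\rightharpoonup u$ in $X$. By Lemma \ref{l1}, $\nabla u_n\to\nabla u$ a.e.\ and $\mathcal{J}'(u)=0$. By the compact embedding $X\hookrightarrow L^\Phi(\mathbb{R}^N)$ from Lemma \ref{l0} we have $u_n\to u$ in $L^\Phi(\mathbb{R}^N)$, hence $\int_{\mathbb{R}^N}\Phi(|u_n-u|)\to 0$ by \eqref{e4}. The remaining task is to show $\nabla u_n\to\nabla u$ in $L^\Phi(\mathbb{R}^N)$, equivalently $\int_{\mathbb{R}^N}\Phi(|\nabla u_n-\nabla u|)\to 0$ by \eqref{e7}. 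The standard route is to test the difference $\mathcal{J}'(u_n)-\mathcal{J}'(u)$ against $u_n-u$: one has $\langle\mathcal{J}'(u_n),u_n-u\rangle\to 0$ (since $\mathcal{J}'(u_n)\to 0$ and $\{u_n-u\}$ is bounded) and $\langle\mathcal{J}'(u),u_n-u\rangle\to 0$ (since $\mathcal{J}'(u)=0$). Using the growth control on $f$ from $(f_1)$ in the form $|f(t)t|\le\varepsilon\Phi_\ast(|t|)+C_\varepsilon\Phi(|t|)$, together with $u_n\to u$ in $L^\Phi$ and the boundedness of $\{u_n\}$ in $L^{\Phi_\ast}$ (continuous embedding $X\hookrightarrow L^{\Phi_\ast}$), one shows $\int_{\mathbb{R}^N}(f(u_n)-f(u))(u_n-u)\to 0$; similarly the zero-order term $\int_{\mathbb{R}^N}V(x)\big(\phi(|u_n|)u_n-\phi(|u|)u\big)(u_n-u)\to 0$, since this integrand is controlled by $\Phi(|u_n|)+\Phi(|u|)$-type quantities via $(\phi_2)$ and convergence in $L^\Phi$. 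Subtracting, we are left with
\[
\int_{\mathbb{R}^N}\big(\phi(|\nabla u_n|)\nabla u_n-\phi(|\nabla u|)\nabla u\big)\cdot(\nabla u_n-\nabla u)\longrightarrow 0.
\]

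The monotonicity of the vector field $\xi\mapsto\phi(|\xi|)\xi$ on $\mathbb{R}^N$, which follows from $(\phi_1)$, guarantees that the integrand above is nonnegative; combining the a.e.\ convergence $\nabla u_n\to\nabla u$ with a Brezis--Lieb / Fatou type argument (or directly the strict monotonicity estimate for $\Phi$-Laplacian operators, as in Fukagai--Narukawa or the references cited) gives $\int_{\mathbb{R}^N}\Phi(|\nabla u_n-\nabla u|)\to 0$. Together with $\int_{\mathbb{R}^N}\Phi(|u_n-u|)\to 0$ this yields $u_n\to u$ in $X$ by \eqref{e7}, proving the $(PS)$ condition. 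The main obstacle is the last step: handling the $\Phi$-Laplacian term requires a convexity/monotonicity inequality adapted to general $\mathcal{N}$-functions rather than the elementary inequalities available for the $p$-Laplacian, so care is needed to extract strong gradient convergence from the vanishing of the monotone pairing — but this is exactly the type of estimate already implicit in Lemma \ref{l1} and its source \cite[Lemma 4.3]{MR3328350}, so I would invoke that machinery.
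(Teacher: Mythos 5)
Your overall plan — boundedness, then upgrade weak to strong convergence via the compact embedding of Lemma \ref{l0} — matches the paper in outline, but the route you take after extracting the weak limit $u$ diverges from the paper's proof, and there are two genuine gaps in it.

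First, you claim that the $V$-weighted zero-order term
\[
\int_{\mathbb{R}^N}V(x)\big(\phi(|u_n|)u_n-\phi(|u|)u\big)(u_n-u)
\]
tends to zero because the integrand is ``controlled by $\Phi(|u_n|)+\Phi(|u|)$-type quantities via $(\phi_2)$ and convergence in $L^\Phi$.'' Under $(V_1)$ the potential $V$ is generically unbounded, so strong convergence $u_n\to u$ in the unweighted $L^\Phi(\mathbb{R}^N)$ does not control this integral; what one would need is strong convergence in the $V$-weighted norm $|\cdot|_{\Phi,V}$, which you only have weakly. (You could instead argue that this term is nonnegative, by the monotonicity from $(\phi_1)$, and deduce its vanishing together with the gradient pairing — but that brings you straight to the second gap.) Second, and more seriously, the final step — concluding $\int_{\mathbb{R}^N}\Phi(|\nabla u_n-\nabla u|)\to 0$ from the vanishing of the monotone pairing $\int(\phi(|\nabla u_n|)\nabla u_n-\phi(|\nabla u|)\nabla u)\cdot(\nabla u_n-\nabla u)\to 0$ together with a.e.\ convergence — is not furnished by any of the cited lemmas. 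Lemma \ref{l1} gives only a.e.\ convergence of gradients and $\mathcal{J}'(u)=0$, not an algebraic inequality bounding $\Phi(|\xi-\eta|)$ from above by $(\phi(|\xi|)\xi-\phi(|\eta|)\eta)\cdot(\xi-\eta)$. For the $p$-Laplacian with $p\geq2$ such an inequality is elementary, but for a general $\mathcal{N}$-function $\Phi$ (and in particular for the degenerate regime $\ell<2$) it does not hold in that simple form, so ``invoking that machinery'' is not enough; you would have to produce or cite an explicit estimate. The paper avoids both issues by a different mechanism: it treats the operator $\mathcal{A}$ (which bundles the gradient and $V$-weighted zero-order terms) as a whole, invokes pseudomonotonicity (hemicontinuity plus strict monotonicity, via \cite[Lemma 2.98]{MR2267795}) to deduce from $\langle\mathcal{A}(u_n),u_n-u\rangle\to 0$ that $\langle\mathcal{A}(u_n),u_n\rangle\to\langle\mathcal{A}(u),u\rangle$, and then applies the generalized Lebesgue dominated convergence theorem to the functions $f_n=\Phi(|\nabla u_n-\nabla u|)+V(x)\Phi(|u_n-u|)$ with dominating functions $g_n$ built from $\phi(|\nabla u_n|)|\nabla u_n|^2$, $V(x)\phi(|u_n|)|u_n|^2$ and the corresponding terms in $u$. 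This sidesteps entirely the need for a lower bound on the monotone pairing and the $V$-weighted strong convergence you are implicitly assuming. I recommend rebuilding your last two steps along these lines.
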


\begin{proof}
Let $\left\{  u_{n}\right\}  $ be a $(PS)_{c}$ sequence of $\mathcal{J}$. Then
$\left\{  u_{n}\right\}  $ is bounded and we may assume that $u_{n}%
\rightharpoonup u$ in $X$. Firstly we show that up to a subsequence%
\begin{equation}
\int_{\mathbb{R}^{N}}f(u_{n})(u_{n}-u)\rightarrow0\text{.} \label{e22}%
\end{equation}
By assumption $(f_{1})$, for any $\varepsilon>0$, there is $C_{\varepsilon}>0$
such that%
\begin{align}
\left\vert f(t)\right\vert  &  \leq\varepsilon\phi_{\ast}(\left\vert
t\right\vert )\left\vert t\right\vert +C_{\varepsilon}\phi(\left\vert
t\right\vert )\left\vert t\right\vert \nonumber\\
&  =\varepsilon\Phi_{\ast}^{\prime}(\left\vert t\right\vert )+C_{\varepsilon
}\Phi^{\prime}(\left\vert t\right\vert )\text{.} \label{e24}%
\end{align}
For $u\in X$, by H\"{o}lder inequality (\ref{eh}) we have%
\[
\int_{\mathbb{R}^{N}}\Phi^{\prime}(\left\vert u\right\vert )\left\vert
v\right\vert \leq2\vert \Phi^{\prime}(\vert u\vert
)\vert _{\tilde{\Phi}}\left\vert v\right\vert _{\Phi}\text{.}%
\]
Note that from (\ref{ep1}), (\ref{e6}) and Lemma \ref{l5} with $V\equiv1$, we
have%
\[
\int_{\mathbb{R}^{N}}\tilde{\Phi}(\vert \Phi^{\prime}(\left\vert
u\right\vert )\vert )\leq\int_{\mathbb{R}^{N}}\Phi(2\left\vert
u\right\vert )\leq\xi_{1}(2)\int_{\mathbb{R}^{N}}\Phi(\left\vert u\right\vert
)\leq2^{m}\xi_{1}(\left\vert u\right\vert _{\Phi})\text{.}%
\]
Therefore, since $\left\{  u_{n}\right\}  $ is bounded in $L^{\Phi}%
(\mathbb{R}^{N})$, $\left\{  \Phi^{\prime}(\left\vert u_{n}\right\vert
)\right\}  $ is also bounded in $L^{\tilde{\Phi}}(\mathbb{R}^{N})$. Similarly,
$\left\{  \Phi_{\ast}^{\prime}(\left\vert u_{n}\right\vert )\right\}  $ is
bounded in $L^{\widetilde{\Phi_{\ast}}}(\mathbb{R}^{N})$. (We remind the reader that instead of the Sob\-olev conjugate function of $\tilde{\Phi}$, here $\widetilde{\Phi_{\ast}}$ is the  complement function of $\Phi_{\ast}$.) Therefore%
\begin{equation}
M:=2\sup_{n}\vert \Phi_{\ast}^{\prime}(\vert u_{n}\vert
)\vert _{\widetilde{\Phi_{\ast}}}\left\vert u_{n}-u\right\vert
_{\Phi_{\ast}}<+\infty\text{.} \label{eex}%
\end{equation}

Because $u_{n}\rightharpoonup u$ in $X$, by Lemma \ref{l0} we have
$u_{n}\rightarrow u$ in $L^{\Phi}(\mathbb{R}^{N})$. Now, using (\ref{e24}) and
H\"{o}lder inequality we get%
\begin{align*}
\left\vert \int_{\mathbb{R}^{N}}f(u_{n})(u_{n}-u)\right\vert  &
\leq\varepsilon\int_{\mathbb{R}^{N}}\Phi_{\ast}^{\prime}(\left\vert u_{n}\right\vert
)\left\vert u_{n}-u\right\vert +C_{\varepsilon}\int_{\mathbb{R}^{N}}\Phi^{\prime}(\left\vert
u_{n}\right\vert )\left\vert u_{n}-u\right\vert \\
&  \leq2\varepsilon\,\vert \Phi_{\ast}^{\prime}(\left\vert u_{n}\right\vert
)\vert _{\widetilde{\Phi_{\ast}}}\left\vert u_{n}-u\right\vert
_{\Phi_{\ast}}+2C_{\varepsilon}\,\vert \Phi^{\prime}(\left\vert
u_{n}\right\vert )\vert _{\tilde{\Phi}}\left\vert u_{n}-u\right\vert
_{\Phi}\text{.}%
\end{align*}
Since $u_{n}\rightarrow u$ in $L^{\Phi}(\mathbb{R}^{N})$, using (\ref{eex}) and the boundedness of $\left\{  \Phi^{\prime}(\left\vert u_{n}\right\vert
)\right\}  $ in $L^{\tilde{\Phi}}(\mathbb{R}^{N})$, it
follows that%
\[
\varlimsup_{n\rightarrow\infty}\left\vert \int_{\mathbb{R}^{N}}f(u_{n}%
)(u_{n}-u)\right\vert \leq M\varepsilon\text{,}%
\]
which implies (\ref{e22}).

To prove that
$u_{n}\rightarrow u$ in $X$, we adapt the argument of \cite[Appendix A]{MR3306384}, where for $V(x)\equiv0$, a $\Phi
$-Laplacian problem on a bounded domain is considered. Let $\mathcal{A}:X\rightarrow X^{\ast}$ be
defined by%
\[
\left\langle \mathcal{A}(u),v\right\rangle =\int_{\mathbb{R}^{N}}%
\phi(\left\vert \nabla u\right\vert )\nabla u\cdot\nabla v+\int_{\mathbb{R}%
^{N}}V(x)\phi(\left\vert u\right\vert )uv\text{.}%
\]
Then it is well known that

\begin{itemize}
\item $\mathcal{A}$ is hemicontinuous, i.e., for all $u,v,w\in X$, the
function%
\[
t\mapsto\left\langle \mathcal{A}( u+tv) ,w\right\rangle
\]
is continuous on $\left[  0,1\right]  $.

\item $\mathcal{A}$ is strictly monotone: $\left\langle \mathcal{A}( u)
-\mathcal{A}( v) ,u-v\right\rangle >0$ for $u,v\in X$ with $u\neq v$.
\end{itemize}
By \cite[Lemma 2.98]{MR2267795}, we know that $\mathcal{A}$ is pseudomonotone,
i.e., for $\left\{  u_{n}\right\}  \subset X$,%
\begin{equation}
u_{n}\rightharpoonup u\text{\quad in }X\text{,}\qquad\qquad\varlimsup\limits_{n\rightarrow\infty}\left\langle \mathcal{A}( u_{n})
,u_{n}-u\right\rangle \leq0 \label{e25}%
\end{equation}
together imply $\mathcal{A}( u_{n}) \rightharpoonup\mathcal{A}( u) $ in $X^{\ast}$
and $\left\langle \mathcal{A}( u_{n}) ,u_{n}\right\rangle \rightarrow
\left\langle \mathcal{A}u,u\right\rangle $.

For our bounded $(PS)_{c}$ sequence $\left\{  u_{n}\right\}  $, (\ref{e22})
implies that (\ref{e25}) holds up to a subsequence. Therefore%
\begin{equation}
\left\langle \mathcal{A}(u_{n}),u_{n}\right\rangle \rightarrow\left\langle
\mathcal{A}u,u\right\rangle \text{.} \label{e26}%
\end{equation}
According to Lemma \ref{l1}, in addition to the well known $u_{n}\rightarrow
u$ a.e.\ in $\mathbb{R}^{N}$ we also have $\nabla u_{n}\rightarrow\nabla u$
a.e.\ in $\mathbb{R}^{N}$. By the continuity of $\Phi$ we get%
\begin{equation}
f_{n}:=\Phi(\left\vert \nabla u_{n}-\nabla u\right\vert )+V(x)\Phi(\left\vert
u_{n}-u\right\vert )\rightarrow0\text{\qquad a.e.\ in }\mathbb{R}^{N}\text{.}
\label{e28}%
\end{equation}
Let $g_{n}:\mathbb{R}^{N}\rightarrow\mathbb{R}$ be defined by%
\[
g_{n}=\frac{2^{m-1}}{\ell}\left\{  \phi(\left\vert \nabla u_{n}\right\vert
)\left\vert \nabla u_{n}\right\vert ^{2}+V(x)\phi(\left\vert u_{n}\right\vert
)\left\vert u_{n}\right\vert ^{2}+\Phi(\left\vert \nabla u\right\vert
)+V(x)\Phi(\left\vert u\right\vert )\right\}  \text{.}%
\]
Then by the monotonicity and convexity of $\Phi$, using (\ref{e6}) and
$(\phi_{2})$ we get%
\begin{align*}
\left\vert f_{n}\right\vert  &  \leq\Phi\left(  \frac{2\left\vert \nabla
u_{n}\right\vert +2\left\vert \nabla u\right\vert }{2}\right)  +V(x)\Phi
\left(  \frac{2\left\vert u_{n}\right\vert +2\left\vert u\right\vert }%
{2}\right) \\
&  \leq\frac{\Phi(2\left\vert \nabla u_{n}\right\vert )+\Phi(2\left\vert
\nabla u\right\vert )}{2}+V(x)\frac{\Phi(2\left\vert u_{n}\right\vert
)+\Phi(2\left\vert u\right\vert )}{2}\\
&  \leq2^{m-1}\left\{  \left[  \Phi(\left\vert \nabla u_{n}\right\vert
)+\Phi(\left\vert \nabla u\right\vert )\right]  +V(x)\left[  \Phi(\left\vert
u_{n}\right\vert )+\Phi(\left\vert u\right\vert )\right]  \right\} \\
&  \leq\frac{2^{m-1}}{\ell}\left\{  \phi(\left\vert \nabla u_{n}\right\vert
)\left\vert \nabla u_{n}\right\vert ^{2}+V(x)\phi(\left\vert u_{n}\right\vert
)\left\vert u_{n}\right\vert ^{2}+\Phi(\left\vert \nabla u\right\vert
)+V(x)\Phi(\left\vert u\right\vert )\right\}  =g_{n}\text{.}%
\end{align*}
We have%
\begin{equation}
g_{n}\rightarrow g:=\frac{2^{m-1}}{\ell}\left\{  \phi(\left\vert \nabla
u\right\vert )\left\vert \nabla u\right\vert ^{2}+V(x)\phi(\left\vert
u\right\vert )\left\vert u\right\vert ^{2}+\Phi(\left\vert \nabla u\right\vert
)+V(x)\Phi(\left\vert u\right\vert )\right\}  \label{e29}%
\end{equation}
a.e.\ in $\mathbb{R}^{N}$, and $g\in L^{1}(\mathbb{R}^{N})$. Moreover, using
(\ref{e26}) we get%
\begin{align*}
\int_{\mathbb{R}^{N}}\left(  \phi(\left\vert \nabla u_{n}\right\vert
)\left\vert \nabla u_{n}\right\vert ^{2}+V(x)\phi(\left\vert u_{n}\right\vert
)\left\vert u_{n}\right\vert ^{2}\right)   &  =\left\langle \mathcal{A}%
(u_{n}),u_{n}\right\rangle \rightarrow\left\langle \mathcal{A}%
(u),u\right\rangle \\
&  =\int_{\mathbb{R}^{N}}\left(  \phi(\left\vert \nabla u\right\vert
)\left\vert \nabla u\right\vert ^{2}+V(x)\phi(\left\vert u\right\vert
)\left\vert u\right\vert ^{2}\right)  \text{,}%
\end{align*}
which implies
\begin{equation}
\int_{\mathbb{R}^{N}}g_{n}\rightarrow\int_{\mathbb{R}^{N}}g\text{.}
\label{e27}%
\end{equation}
Now, (\ref{e28}), (\ref{e29}), (\ref{e27}) and the generalized Lebesgue
dominated theorem gives
\[
\int_{\mathbb{R}^{N}}\left[  \Phi(\left\vert \nabla u_{n}-\nabla u\right\vert
)+V(x)\Phi(\left\vert u_{n}-u\right\vert )\right]  =\int_{\mathbb{R}^{N}}%
f_{n}\rightarrow0\text{,}%
\]
that is $u_{n}\rightarrow u$ in $X$.
\end{proof}

\begin{lemma}
\label{l6}The functional $\mathcal{F}:X\rightarrow\mathbb{R}$ defined by
\[
\mathcal{F}( u) =\int_{\mathbb{R}^{N}}F( u)
\]
is weakly-strongly continuous, that is, if $u_{n}\rightharpoonup u$ in $X$,
then $\mathcal{F}( u_{n}) \rightarrow\mathcal{F}( u) $.
\end{lemma}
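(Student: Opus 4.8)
The plan is to derive this from the compact embedding $X\hookrightarrow L^{\Phi}(\mathbb{R}^{N})$ of Lemma~\ref{l0}, the continuous embedding $X\hookrightarrow L^{\Phi_{\ast}}(\mathbb{R}^{N})$, and the growth restriction on $f$ contained in $(f_{1})$. As a preliminary step, arguing exactly as for the bound $|f(t)t|\le\varepsilon\Phi_{\ast}(|t|)+C_{\varepsilon}\Phi(|t|)$ already used in Section~3 (and irrespective of the value of $\lambda_{0}$), one gets from $(f_{1})$ that for every $\varepsilon>0$ there is $C_{\varepsilon}>0$ with
\[
|F(t)|\le\varepsilon\,\Phi_{\ast}(|t|)+C_{\varepsilon}\,\Phi(|t|)\qquad\text{for all }t\in\mathbb{R}.
\]
Now let $u_{n}\rightharpoonup u$ in $X$. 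Then $\{u_{n}\}$ is bounded, so by Lemma~\ref{l0} we have $u_{n}\to u$ in $L^{\Phi}(\mathbb{R}^{N})$, that is $\int_{\mathbb{R}^{N}}\Phi(|u_{n}-u|)\to0$ by \eqref{e4}, while the continuous embedding into $L^{\Phi_{\ast}}(\mathbb{R}^{N})$ gives $d:=\sup_{n}\int_{\mathbb{R}^{N}}\Phi_{\ast}(|u_{n}|)<\infty$ (by the analogue of \eqref{e5} for $\Phi_{\ast}$ noted in the remark after Lemma~\ref{l5}). Since to prove $\mathcal{F}(u_{n})\to\mathcal{F}(u)$ it suffices to show that every subsequence admits a further subsequence converging to $\mathcal{F}(u)$, we may in addition assume $u_{n}\to u$ a.e.\ in $\mathbb{R}^{N}$, because a subsequence along which $\int_{\mathbb{R}^{N}}\Phi(|u_{n}-u|)\to0$ converges to $u$ in measure.

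Next I would fix a large ball $B_{R}$ and estimate $\int|F(u_{n})-F(u)|$ separately on $B_{R}$ and on $\mathbb{R}^{N}\setminus B_{R}$. On the complement, the growth bound together with $\Phi(|u_{n}|)\le 2^{m-1}\bigl(\Phi(|u_{n}-u|)+\Phi(|u|)\bigr)$ --- a consequence of the convexity of $\Phi$ and \eqref{e6} --- and the uniform bound $d$ give
\[
\varlimsup_{n\to\infty}\int_{\mathbb{R}^{N}\setminus B_{R}}\!\!|F(u_{n})-F(u)|\;\le\;\varepsilon d+C_{\varepsilon}\bigl(2^{m-1}+1\bigr)\!\!\int_{\mathbb{R}^{N}\setminus B_{R}}\!\!\Phi(|u|)+\varepsilon\!\!\int_{\mathbb{R}^{N}\setminus B_{R}}\!\!\Phi_{\ast}(|u|),
\]
which is made arbitrarily small by first choosing $\varepsilon$ small (killing the $\varepsilon d$ term) and then $R$ large (killing the tails of the $L^{1}$ functions $\Phi(|u|)$ and $\Phi_{\ast}(|u|)$). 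On the bounded set $B_{R}$ I would invoke Vitali's convergence theorem: $F(u_{n})\to F(u)$ a.e., $F(u_{n})\in L^{1}(B_{R})$, and $\{F(u_{n})\}$ is uniformly integrable over $B_{R}$. The last property again follows from the growth bound: given $\eta>0$, choose $\varepsilon'$ with $\varepsilon' d<\eta/2$, then use that $\{\Phi(|u_{n}|)\}$ is equi-integrable on $B_{R}$ --- because $\{\Phi(|u_{n}-u|)\}$ converges in $L^{1}$, hence is equi-integrable, and $\Phi(|u|)\in L^{1}$ --- to absorb the $C_{\varepsilon'}\Phi(|u_{n}|)$ term. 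Hence $\int_{B_{R}}F(u_{n})\to\int_{B_{R}}F(u)$, and combining the two pieces yields $\mathcal{F}(u_{n})\to\mathcal{F}(u)$.

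The one delicate point is the tail/uniform-integrability control, and it is delicate precisely because $f$ is only \emph{asymptotically} subcritical: the family $\{\Phi_{\ast}(|u_{n}|)\}$ is merely bounded in $L^{1}(\mathbb{R}^{N})$, not equi-integrable, so the term $\varepsilon\,\Phi_{\ast}(|u_{n}|)$ in the growth bound can be handled neither by smallness of the measure of a set nor by a tail estimate, and must instead be absorbed --- both on $B_{R}$ and on its complement --- by the smallness of $\varepsilon$, chosen \emph{before} $R$ and the Vitali threshold. Everything else is the routine weak-to-strong continuity of a Nemytskii-type functional on a bounded domain, so I do not expect further obstructions.
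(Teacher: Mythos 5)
Your proof is correct, and it rests on the same two pillars as the paper's: the growth bound $|F(t)|\le\varepsilon\Phi_{\ast}(|t|)+C_{\varepsilon}\Phi(|t|)$ from $(f_{1})$, with the critical term absorbed purely by the smallness of $\varepsilon$ against the uniform bound $d$ on $\int\Phi_{\ast}(|u_{n}|)$, and the compact embedding $X\hookrightarrow L^{\Phi}(\mathbb{R}^{N})$ of Lemma \ref{l0} to control the subcritical term. Where you genuinely diverge is in how the subcritical part is upgraded from a.e.\ convergence to convergence of integrals. The paper performs no spatial decomposition: from $\Phi(2|u_{n}-u|)\to0$ in $L^{1}$ it extracts, via \cite[Theorem 4.9]{MR2759829}, a subsequence together with an $L^{1}$ dominating function $k$, so that $\Phi(|u_{n}|)\le\tfrac12\bigl(k+\Phi(2|u|)\bigr)$ and dominated convergence yields $\int\Phi(|u_{n}|)\to\int\Phi(|u|)$ (its (\ref{e30})), after which the conclusion follows in one display. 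You instead cut $\mathbb{R}^{N}$ into $B_{R}$ and its complement, kill the tail using the integrability of $\Phi(|u|)$ and $\Phi_{\ast}(|u|)$ together with the convexity estimate $\Phi(|u_{n}|)\le2^{m-1}\bigl(\Phi(|u_{n}-u|)+\Phi(|u|)\bigr)$, and apply Vitali's theorem on the ball, where equi-integrability of $\{\Phi(|u_{n}|)\}$ again comes from the $L^{1}$ convergence of $\Phi(|u_{n}-u|)$. Both routes are valid; the paper's domination trick is shorter, while yours is more self-contained (it needs only Chebyshev, convexity and Vitali, not the dominating-function refinement of Riesz--Fischer) and has the incidental merit of avoiding the paper's final display, in which $|\mathcal{F}(u_{n})-\mathcal{F}(u)|$ is bounded by an expression containing the signed difference $\int\Phi(|u_{n}|)-\int\Phi(|u|)$ rather than a sum. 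Your subsequence extraction for a.e.\ convergence and the closing subsequence principle are both necessary and correctly handled.
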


\begin{proof}
Suppose $\left\{  u_{n}\right\}  \subset X$ satisfies $u_{n}\rightharpoonup u$
in $X$. Then $u_{n}\rightarrow u$ in $L^{\Phi}(\mathbb{R}^{N})$, by Lemma
\ref{l0}. Thus $\Phi(\left\vert u_{n}-u\right\vert )\rightarrow0$ in
$L^{1}(\mathbb{R}^{N})$, which also implies
\[
\Phi(2\left\vert u_{n}-u\right\vert )\rightarrow0\qquad \text{in }
L^{1}(\mathbb{R}^{N})\text{.}
\]
By \cite[Theorem 4.9]{MR2759829}, there exists $k\in
L^{1}(\mathbb{R}^{N})$ such that up to a subsequence,%
\[
\Phi(2\left\vert u_{n}-u\right\vert )\rightarrow0\text{\quad a.e.\ in
}\mathbb{R}^{N}\text{,\qquad}\Phi(2\left\vert u_{n}-u\right\vert )\leq
k\text{\quad a.e.\ in }\mathbb{R}^{N}\text{.}%
\]
By the monotonicity and convexity of $\Phi$,%
\begin{align*}
\Phi(\left\vert u_{n}\right\vert )  &  \leq\Phi(\left\vert u_{n}-u\right\vert
+\left\vert u\right\vert )\\
&  \leq\frac{1}{2}\Phi(2\left\vert u_{n}-u\right\vert )+\frac{1}{2}%
\Phi(2\left\vert u\right\vert )\leq\frac{1}{2}(k+\Phi(2\left\vert u\right\vert
))\text{.}%
\end{align*}
Since $k+\Phi(2\left\vert u\right\vert )\in L^{1}(\mathbb{R}^{N})$, and
$\Phi(\left\vert u_{n}\right\vert )\rightarrow\Phi(\left\vert u\right\vert )$
a.e.\ in $\mathbb{R}^{N}$, we deduce%
\begin{equation}
\int_{\mathbb{R}^{N}}\Phi(\left\vert u_{n}\right\vert )\rightarrow
\int_{\mathbb{R}^{N}}\Phi(\left\vert u\right\vert )\text{.} \label{e30}%
\end{equation}

For any $\varepsilon>0$, choose $C_{\varepsilon}>0$ such that%
\[
\left\vert F(t)\right\vert \leq\varepsilon\Phi_{\ast}(\left\vert t\right\vert
)+C_{\varepsilon}\Phi(\left\vert t\right\vert )\text{.}%
\]
Then we have%
\begin{align*}
\left\vert \mathcal{F}(u_{n})-\mathcal{F}(u)\right\vert  &  =\left\vert
\int_{\mathbb{R}^{N}}F(u_{n})-\int_{\mathbb{R}^{N}}F(u)\right\vert \\
&  \leq\varepsilon\left(  \int_{\mathbb{R}^{N}}\Phi_{\ast}(\left\vert
u_{n}\right\vert )+\int_{\mathbb{R}^{N}}\Phi_{\ast}(\left\vert u\right\vert
)\right)  +C_{\varepsilon}\left(  \int_{\mathbb{R}^{N}}\Phi(\left\vert
u_{n}\right\vert )-\int_{\mathbb{R}^{N}}\Phi(\left\vert u\right\vert )\right)
\text{.}%
\end{align*}
Using (\ref{e30}) we get%
\begin{equation}
\varlimsup_{n\rightarrow\infty}\left\vert \mathcal{F}(u_{n})-\mathcal{F}%
(u)\right\vert \leq\varepsilon\left(  \int_{\mathbb{R}^{N}}\Phi_{\ast
}(\left\vert u_{n}\right\vert )+\int_{\mathbb{R}^{N}}\Phi_{\ast}(\left\vert
u\right\vert )\right)  \text{.}%
\label{exd}
\end{equation}
Since $\left\{  u_{n}\right\}  $ in bounded in $X$, by the continuous embedding
$X\hookrightarrow L^{\Phi_{\ast}}(\mathbb{R}^{N})$ we see that $\left\{  u_{n}\right\}  $ is bounded in $L^{\Phi_{\ast}}(\mathbb{R}^{N})$, letting
$\varepsilon\rightarrow0$ in (\ref{exd}) we deduce $\mathcal{F}(u_{n})\rightarrow
\mathcal{F}(u)$.
\end{proof}

Now we are ready to prove the second part of Theorem \ref{t1}. We need the
following symmetric mountain pass theorem due to Ambrosetti-Rabinowitz
\cite{MR0370183}.

\begin{proposition}
[{\cite[Theorem 9.12]{MR845785}}]\label{p1}Let $X=Y\oplus Z$ be an infinite
dimensional Banach space with $\dim Y<\infty$. Suppose $\mathcal{J}\in C^{1}(
X) $ is even, satisfies $( PS) $, $\mathcal{J}( 0) =0$ and

\begin{description}
\item[$(1)$] for some $\rho>0$, $\inf_{\partial B_{\rho}\cap Z}\mathcal{J}>0$,
where $B_{\rho}=\left\{  \left.  u\in X\right\vert \,\left\Vert u\right\Vert
<\rho\right\}  $,

\item[$( 2) $] for any finite dimensional subspace $W\subset X$, there is an
$R=R( W) $ such that $\mathcal{J}\leq0$ on $W\backslash B_{R( W) }$,
\end{description}
then $\mathcal{J}$ has a sequence of critical values $c_{j}\rightarrow+\infty$.
\end{proposition}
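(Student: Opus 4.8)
The plan is to reprove this standard $\mathbb{Z}_2$-equivariant minimax principle (Theorem 9.12 of \cite{MR845785}) via the Krasnoselskii genus. Write $\mathcal{A}$ for the collection of closed symmetric ($A=-A$) subsets of $X\setminus\{0\}$, $\gamma\colon\mathcal{A}\to\mathbb{N}\cup\{0,+\infty\}$ for the genus, and $\mathcal{J}^{d}:=\{u\in X:\mathcal{J}(u)\le d\}$. I will use the classical properties of $\gamma$: monotonicity; subadditivity $\gamma(A\cup B)\le\gamma(A)+\gamma(B)$; the fact that an odd continuous map $A\to B$ (with $A,B\in\mathcal{A}$) forces $\gamma(A)\le\gamma(B)$; that the relative boundary, inside an $m$-dimensional subspace, of a bounded symmetric open neighbourhood of $0$ has genus $m$; the continuity property that a compact $A\in\mathcal{A}$ has a symmetric open neighbourhood $N$ with $0\notin\overline{N}$ and $\gamma(\overline{N})=\gamma(A)$; and the Borsuk--Ulam estimate $\gamma(\{u\in\Sigma:\varphi(u)=0\})\ge\gamma(\Sigma)-k$ for compact $\Sigma\in\mathcal{A}$ and odd continuous $\varphi\colon\Sigma\to\mathbb{R}^{k}$. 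I will also invoke the equivariant deformation lemma (\cite[Theorem A.4]{MR845785}): since $\mathcal{J}$ is even, of class $C^1$ and satisfies $(PS)$, for any $\bar c\in\mathbb{R}$ and any neighbourhood $\mathcal{O}$ of the critical set $K_{\bar c}=\{u:\mathcal{J}'(u)=0,\ \mathcal{J}(u)=\bar c\}$ there are $\epsilon>0$ and an odd homeomorphism $\eta$ of $X$ that is the identity wherever $\mathcal{J}<\bar c-\epsilon$ or $\mathcal{J}>\bar c+\epsilon$ and satisfies $\eta(\mathcal{J}^{\bar c+\epsilon}\setminus\mathcal{O})\subset\mathcal{J}^{\bar c-\epsilon}$; if moreover $K_{\bar c}=\emptyset$ one may take $\mathcal{O}=\emptyset$.

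Put $k_0=\dim Y$ and $\alpha=\inf_{\partial B_\rho\cap Z}\mathcal{J}>0$. First I would fix a chain $Y\subset Y_{k_0+1}\subset Y_{k_0+2}\subset\cdots$ of subspaces of $X$ with $\dim Y_m=m$, use hypothesis (2) to pick radii $R_m>\rho$ with $\mathcal{J}\le0$ on $Y_m\setminus B_{R_m}$, and set $D_m=\overline{B_{R_m}}\cap Y_m$ (a compact body whose relative boundary $\partial D_m$ lies in $\{\mathcal{J}\le0\}$) and $G_m=\{g\in C(D_m,X):g\text{ odd},\ g|_{\partial D_m}=\mathrm{id}\}$, which contains $\mathrm{id}_{D_m}$. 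For $j\ge1$ let $\Gamma_j$ consist of all sets $g(\overline{D_m\setminus A})$ with $m\ge j+k_0$, $g\in G_m$, $A\in\mathcal{A}$ and $\gamma(A)\le m-j-k_0$, and set $c_j=\inf_{B\in\Gamma_j}\max_{u\in B}\mathcal{J}(u)$. Three facts are routine. Taking $m=j+k_0$, $g=\mathrm{id}$, $A=\emptyset$ shows $D_{j+k_0}\in\Gamma_j$, so $c_j<+\infty$; because both defining conditions only relax as $j$ decreases, $\Gamma_{j+1}\subset\Gamma_j$, hence $c_1\le c_2\le\cdots$; and every $B=g(\overline{D_m\setminus A})\in\Gamma_j$ meets $\partial B_\rho\cap Z$, so $c_j\ge\alpha>0$. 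For the last (the intersection lemma), put $\mathcal{O}=\{u\in\mathrm{int}\,D_m:u\notin A,\ \|g(u)\|<\rho\}$; since $g=\mathrm{id}$ on $\partial D_m$, where $\|u\|=R_m>\rho$, its closure avoids $\partial D_m$, so $\mathcal{O}$ is a bounded symmetric open neighbourhood of $0$ in $Y_m$ whose relative boundary has genus $m$, whence $\Sigma:=\overline{\partial\mathcal{O}\setminus A}\subset\{u:\|g(u)\|=\rho\}$ has $\gamma(\Sigma)\ge m-\gamma(A)\ge j+k_0$; applying the Borsuk--Ulam estimate to $\varphi=P\circ g|_\Sigma$, with $P\colon X\to Y$ the projection killing $Z$, produces $u\in\Sigma$ with $Pg(u)=0$, and then $g(u)\in Z$, $\|g(u)\|=\rho$, $g(u)\in B$.

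Next I would prove each $c_j$ is a critical value, for which the useful remark is that $\Gamma_j$ is stable under the relevant deformations: if $\eta$ is an odd homeomorphism of $X$ equal to the identity on $\{\mathcal{J}\le0\}$, then $\eta\circ g\in G_m$ whenever $g\in G_m$ (it is odd, and on $\partial D_m$ we have $\mathcal{J}\le0$, so $\eta\circ g=\eta=\mathrm{id}$ there), hence $\eta\big(g(\overline{D_m\setminus A})\big)=(\eta\circ g)(\overline{D_m\setminus A})\in\Gamma_j$ whenever $g(\overline{D_m\setminus A})\in\Gamma_j$. If $c_j$ were a regular value then $K_{c_j}=\emptyset$, and the deformation lemma (with $\mathcal{O}=\emptyset$ and $\epsilon<\alpha\le c_j$, so that $\eta=\mathrm{id}$ on $\{\mathcal{J}\le0\}$) gives an odd homeomorphism $\eta$ with $\eta(\mathcal{J}^{c_j+\epsilon})\subset\mathcal{J}^{c_j-\epsilon}$; choosing $B\in\Gamma_j$ with $\max_B\mathcal{J}<c_j+\epsilon$ yields $\eta(B)\in\Gamma_j$ with $\max_{\eta(B)}\mathcal{J}\le c_j-\epsilon<c_j$, contradicting the definition of $c_j$. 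Hence each $c_j$ is a critical value.

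The crux is to rule out that $\{c_j\}$ is bounded; with monotonicity this gives $c_j\to+\infty$ and finishes the proof. Suppose $c_j\le M$ for all $j$; being nondecreasing, $c_j\nearrow\bar c$ for some $\bar c\ge\alpha>0$. By $(PS)$ the critical set $K_{\bar c}$ is compact, it is symmetric since $\mathcal{J}$ is even, and $0\notin K_{\bar c}$ because $\mathcal{J}(0)=0<\bar c$; hence $N_0:=\gamma(K_{\bar c})<+\infty$ and there is a symmetric open $\mathcal{O}\supset K_{\bar c}$ with $0\notin\overline{\mathcal{O}}$ and $\gamma(\overline{\mathcal{O}})=N_0$. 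The deformation lemma furnishes $\epsilon\in(0,\bar c)$ and an odd homeomorphism $\eta$, equal to the identity on $\{\mathcal{J}\le0\}$ (since $\epsilon<\bar c$), with $\eta(\mathcal{J}^{\bar c+\epsilon}\setminus\mathcal{O})\subset\mathcal{J}^{\bar c-\epsilon}$. Since $c_j\nearrow\bar c$, pick $j_0$ with $c_{j_0}>\bar c-\epsilon$, and choose $B=g(\overline{D_m\setminus A})\in\Gamma_{j_0+N_0}$ with $\max_B\mathcal{J}<\bar c+\epsilon$ (possible because $c_{j_0+N_0}\le\bar c$), so $B\subset\mathcal{J}^{\bar c+\epsilon}$. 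Since $g$ is odd continuous and $0\notin\overline{\mathcal{O}}$, $\gamma\big(g^{-1}(\overline{\mathcal{O}})\big)\le\gamma(\overline{\mathcal{O}})=N_0$, so $A':=A\cup g^{-1}(\overline{\mathcal{O}})$ satisfies $\gamma(A')\le(m-(j_0+N_0)-k_0)+N_0=m-j_0-k_0$; thus $B':=(\eta\circ g)(\overline{D_m\setminus A'})\in\Gamma_{j_0}$. But for $u\in D_m\setminus A'$ we have $g(u)\in B\subset\mathcal{J}^{\bar c+\epsilon}$ and $g(u)\notin\overline{\mathcal{O}}$, hence $\eta(g(u))\in\mathcal{J}^{\bar c-\epsilon}$; taking closures, $B'\subset\mathcal{J}^{\bar c-\epsilon}$, so $c_{j_0}\le\max_{B'}\mathcal{J}\le\bar c-\epsilon$, contradicting $c_{j_0}>\bar c-\epsilon$. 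The delicate part throughout is exactly this last piece of bookkeeping: one must enlarge the obstruction set $A$ by $g^{-1}(\overline{\mathcal{O}})$ while keeping the genus budget $m-j-k_0$ from going negative, and it is subadditivity of $\gamma$ together with the finiteness $\gamma(\overline{\mathcal{O}})=\gamma(K_{\bar c})<+\infty$---which is precisely where $(PS)$ enters---that makes the estimate close.
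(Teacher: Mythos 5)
The paper does not prove this proposition: it is quoted verbatim from Rabinowitz \cite[Theorem 9.12]{MR845785}, so the paper's only ``proof'' is that citation. Your argument is a correct reconstruction of the standard genus-based minimax proof given there --- the classes $\Gamma_j$ built from odd maps of the discs $D_m$ with low-genus sets removed, the intersection lemma $B\cap\partial B_\rho\cap Z\neq\emptyset$ via the Borsuk--Ulam genus estimate, and the deformation argument with the genus bookkeeping around $K_{\bar c}$ forcing $c_j\to+\infty$ --- so it takes essentially the same approach as the cited source.
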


\subsection{Proof of Theorem \ref{t1} (2)}

We know that the $C^{1}$-functional $\mathcal{J}$ given in (\ref{ecc}) is
even, satisfies $(PS)$ and $\mathcal{J}(0)=0$. To get an unbounded sequence of
critical values of $\mathcal{J}$, it suffices to verify the two assumptions in
Proposition \ref{p1}.

\emph{Verification of }$(1)$. Since $X$ is separable reflexive Banach space,
there exist $\left\{  e_{i}\right\}  _{1}^{\infty}\subset X$ and
$\{f^{i}\}_{1}^{\infty}\subset X^{\ast}$ such that $\big\langle f^{i}%
,e_{j}\big\rangle=\delta_{j}^{i}$ and%
\[
X=\overline{\operatorname*{span}}\left\{  \left.  e_{i}\right\vert
\,i\geq1\right\}  \text{,\qquad}X^{\ast}=\overline{\operatorname*{span}%
}^{w^{\ast}}\left\{  \left.  f^{i}\right\vert \,i\geq1\right\}  \text{.}%
\]
Let%
\[
Y_{k}=\overline{\operatorname*{span}}\left\{  \left.  e_{i}\right\vert
\,i<k\right\}  \text{,\qquad}Z_{k}=\overline{\operatorname*{span}}\left\{
\left.  e_{i}\right\vert \,i\geq k\right\}  \text{.}%
\]
In Lemma \ref{l6} we have proved that the functional $\mathcal{F}$ is
weakly-strongly continuous. Therefore, by \cite[Lemma 3.3]{MR2092084} we see
that%
\begin{equation}
\beta_{k}=\sup_{u\in\partial B_{1}\cap Z_{k}}\left\vert \mathcal{F}%
(u)\right\vert \rightarrow0\text{.} \label{e31}%
\end{equation}

For $u\in\partial B_{1}$, we have $\left\vert \nabla u\right\vert _{\Phi}%
\leq1$ and $\left\vert u\right\vert _{\Phi,V}\leq1$. Hence there exists a
constant $c>0$ such that%
\[
\int_{\mathbb{R}^{N}}\Phi(\left\vert \nabla u\right\vert )+\int_{\mathbb{R}%
^{N}}V(x)\Phi(\left\vert u\right\vert )\geq\left\vert \nabla u\right\vert
_{\Phi}^{\ell}+\left\vert u\right\vert _{\Phi,V}^{\ell}\geq c\text{.}%
\]
Using (\ref{e31}), we can choose $k$ such that $\beta_{k}<c$. Set $Z=Z_{k}$
and $Y=Y_{k}$. Then $\dim Y<\infty$, $X=Y\oplus Z$, for $u\in\partial
B_{1}\cap Z$ we have%
\[
\mathcal{J}(u)=\int_{\mathbb{R}^{N}}\Phi(\left\vert \nabla u\right\vert
)+\int_{\mathbb{R}^{N}}V(x)\Phi(\left\vert u\right\vert )-\mathcal{F}(u)\geq
c-\beta_{k}>0\text{.}%
\]
This verifies condition $(1)$ of Proposition \ref{p1}.

\emph{Verification of }$(2)$. Because $\theta>m$, condition $(f_{2})$ implies%
\[
\lim_{\left\vert t\right\vert \rightarrow\infty}\frac{F(t)}{\left\vert
t\right\vert ^{m}}=+\infty\text{.}%
\]
Let $W$ be any given finite dimensional subspace of $X$ and $\left\{  u_{n}\right\}  $
be a sequence in $W$ such that $\left\Vert u_{n}\right\Vert \rightarrow\infty
$. Then%
\[
v_{n}=\frac{u_{n}}{\left\Vert u_{n}\right\Vert }\rightarrow v
\]
for some $v\in W\cap\partial B_{1}$. For $x\in\left\{  v\neq0\right\}  $ we
have%
\[
\left\vert u_{n}(x)\right\vert =\left\Vert u_{n}\right\Vert \left\vert
v_{n}(x)\right\vert \rightarrow+\infty\text{.}%
\]
Applying the Fatou lemma and noting $F\ge0$, we deduce%
\begin{align*}
\frac{1}{\left\Vert u_{n}\right\Vert ^{m}}\int_{\mathbb{R}^{N}}F(u_{n})  &
\geq\frac{1}{\left\Vert u_{n}\right\Vert ^{m}}\int_{v\neq0}F(u_{n})\\
&  =\int_{v\neq0}\frac{F(u_{n})}{\left\vert u_{n}\right\vert }\left\vert
v_{n}\right\vert \rightarrow+\infty\text{.}%
\end{align*}
Consequently, applying Lemma \ref{l5} we get
\begin{align*}
\mathcal{J}(u_{n})  &  =\int_{\mathbb{R}^{N}}\Phi(\left\vert \nabla
u_{n}\right\vert )+\int_{\mathbb{R}^{N}}V(x)\Phi(\left\vert u_{n}\right\vert
)-\int_{\mathbb{R}^{N}}F(u_{n})\\
&  \leq\xi_{1}(\left\vert \nabla u_{n}\right\vert _{\Phi})+\xi_{1}(\left\vert
u\right\vert _{\Phi,V})-\int_{\mathbb{R}^{N}}F(u_{n})\\
&  \leq\left\vert \nabla u_{n}\right\vert _{\Phi}^{m}+\left\vert \nabla
u_{n}\right\vert _{\Phi}^{\ell}+\left\vert u_{n}\right\vert _{\Phi,V}%
^{m}+\left\vert u_{n}\right\vert _{\Phi,V}^{\ell}-\int_{\mathbb{R}^{N}}%
F(u_{n})\\
&  \leq2\left(  \left\Vert u_{n}\right\Vert ^{m}+\left\Vert u_{n}\right\Vert
^{\ell}\right)  -\int_{\mathbb{R}^{N}}F(u_{n})\\
&  =2\left\Vert u_{n}\right\Vert ^{m}\left(  1+\left\Vert u_{n}\right\Vert
^{\ell-m}-\frac{1}{\left\Vert u_{n}\right\Vert ^{m}}\int_{\mathbb{R}^{N}%
}F(u_{n})\right)  \rightarrow-\infty
\end{align*}
because $\ell\leq m$. Hence condition $(2)$ of Proposition \ref{p1} is
verified, and the proof of Theorem \ref{t1} (2) is completed.


\begin{thebibliography}{10}

\bibitem{MR2424078}
R.~A. Adams, J.~J.~F. Fournier, \emph{Sobolev spaces}, \emph{Pure and Applied
  Mathematics (Amsterdam)}, vol. 140, Elsevier/Academic Press, Amsterdam, 2nd
  ed., 2003.

\bibitem{MR3688036}
K.~Ait-Mahiout, C.~O. Alves, Multiple solutions for a class of quasilinear
  problems in {O}rlicz-{S}obolev spaces, Asymptot. Anal. 104 (2017) 49--66.

\bibitem{MR3905639}
K.~Ait-Mahiout, C.~O. Alves, Existence and multiplicity of solutions for a
  class of quasilinear problems in {O}rlicz-{S}obolev spaces without
  {A}mbrosetti-{R}abinowitz condition, J. Elliptic Parabol. Equ. 4 (2018)
  389--416.

\bibitem{MR3694761}
C.~O. Alves, A.~R. da~Silva, Existence of multi-peak solutions for a class of
  quasilinear problems in {O}rlicz-{S}obolev spaces, Acta Appl. Math. 151
  (2017) 171--198.

\bibitem{MR3328350}
C.~O. Alves, G.~M. Figueiredo, J.~A. Santos, Strauss and {L}ions type results
  for a class of {O}rlicz-{S}obolev spaces and applications, Topol. Methods
  Nonlinear Anal. 44 (2014) 435--456.

\bibitem{MR0370183}
A.~Ambrosetti, P.~H. Rabinowitz, Dual variational methods in critical point
  theory and applications, J. Functional Analysis 14 (1973) 349--381.

\bibitem{MR3148112}
A.~Azzollini, P.~d'Avenia, A.~Pomponio, Quasilinear elliptic equations in
  {$\Bbb{R}^N$} via variational methods and {O}rlicz-{S}obolev embeddings,
  Calc. Var. Partial Differential Equations 49 (2014) 197--213.

\bibitem{MR3619241}
M.~Badiale, M.~Guida, S.~Rolando, Compactness and existence results for the
  {$p$}-{L}aplace equation, J. Math. Anal. Appl. 451 (2017) 345--370.

\bibitem{MR3454625}
R.~Bartolo, A.~M. Candela, A.~Salvatore, Multiplicity results for a class of
  asymptotically {$p$}-linear equations on {$\Bbb{R}^N$}, Commun. Contemp.
  Math. 18 (2016) 1550031, 24.

\bibitem{MR3462564}
R.~Bartolo, A.~M. Candela, A.~Salvatore, On a class of superlinear
  {$(p,q)$}-{L}aplacian type equations on {$\Bbb{R}^N$}, J. Math. Anal. Appl.
  438 (2016) 29--41.

\bibitem{MR1349229}
T.~Bartsch, Z.~Q. Wang, Existence and multiplicity results for some superlinear
  elliptic problems on {${\bf R}^N$}, Comm. Partial Differential Equations 20
  (1995) 1725--1741.

\bibitem{MR2759829}
H.~Brezis, \emph{Functional analysis, {S}obolev spaces and partial differential
  equations}, Universitext, Springer, New York, 2011.

\bibitem{MR1127041}
H.~Brezis, L.~Nirenberg, Remarks on finding critical points, Comm. Pure Appl.
  Math. 44 (1991) 939--963.

\bibitem{MR2267795}
S.~Carl, V.~K. Le, D.~Motreanu, \emph{Nonsmooth variational problems and their
  inequalities}, Springer Monographs in Mathematics, Springer, New York, 2007.
  Comparison principles and applications.

\bibitem{MR3306384}
M.~L.~M. Carvalho, J.~V.~A. Goncalves, E.~D. da~Silva, On quasilinear elliptic
  problems without the {A}mbrosetti-{R}abinowitz condition, J. Math. Anal.
  Appl. 426 (2015) 466--483.

\bibitem{MR3300789}
M.~F. Chaves, G.~Ercole, O.~H. Miyagaki, Existence of a nontrivial solution for
  the {$(p,q)$}-{L}aplacian in {$\Bbb{R}^N$} without the
  {A}mbrosetti-{R}abinowitz condition, Nonlinear Anal. 114 (2015) 133--141.

\bibitem{MR3513973}
N.~Chorfi, V.~t.~D. R\u~adulescu, Standing wave solutions of a quasilinear
  degenerate {S}chr\"odinger equation with unbounded potential, Electron. J.
  Qual. Theory Differ. Equ.  (2016) Paper No. 37, 12.

\bibitem{MR1777463}
P.~Cl\'{e}ment, M.~Garc\'{\i}a-Huidobro, R.~Man\'{a}sevich, K.~Schmitt,
  Mountain pass type solutions for quasilinear elliptic equations, Calc. Var.
  Partial Differential Equations 11 (2000) 33--62.

\bibitem{MR1181725}
V.~Coti~Zelati, P.~H. Rabinowitz, Homoclinic type solutions for a semilinear
  elliptic {PDE} on {${\bf R}^n$}, Comm. Pure Appl. Math. 45 (1992) 1217--1269.

\bibitem{MR2092084}
X.~Fan, X.~Han, Existence and multiplicity of solutions for
  {$p(x)$}-{L}aplacian equations in {$\bold R^N$}, Nonlinear Anal. 59 (2004)
  173--188.

\bibitem{MR2271234}
N.~Fukagai, M.~Ito, K.~Narukawa, Positive solutions of quasilinear elliptic
  equations with critical {O}rlicz-{S}obolev nonlinearity on {$\bold R^N$},
  Funkcial. Ekvac. 49 (2006) 235--267.

\bibitem{MR2317653}
N.~Fukagai, K.~Narukawa, On the existence of multiple positive solutions of
  quasilinear elliptic eigenvalue problems, Ann. Mat. Pura Appl. (4) 186 (2007)
  539--564.

\bibitem{MR1751952}
W.~Kryszewski, A.~Szulkin, Generalized linking theorem with an application to a
  semilinear {S}chr\"odinger equation, Adv. Differential Equations 3 (1998)
  441--472.

\bibitem{MR2524439}
G.~Li, X.~Liang, The existence of nontrivial solutions to nonlinear elliptic
  equation of {$p$}-{$q$}-{L}aplacian type on {$\Bbb R^N$}, Nonlinear Anal. 71
  (2009) 2316--2334.

\bibitem{MR3008324}
G.~Li, C.~Wang, The existence of a nontrivial solution to {$p$}-{L}aplacian
  equations in {$\Bbb R^N$} with supercritical growth, Math. Methods Appl. Sci.
  36 (2013) 69--79.

\bibitem{MR2271695}
Y.~Li, Z.-Q. Wang, J.~Zeng, Ground states of nonlinear {S}chr\"odinger
  equations with potentials, Ann. Inst. H. Poincar\'e Anal. Non Lin\'eaire 23
  (2006) 829--837.

\bibitem{MR2794422}
C.~Liu, Y.~Zheng, Existence of nontrivial solutions for {$p$}-{L}aplacian
  equations in {${\bf R}^N$}, J. Math. Anal. Appl. 380 (2011) 669--679.

\bibitem{MR2957647}
S.~Liu, On superlinear {S}chr\"odinger equations with periodic potential, Calc.
  Var. Partial Differential Equations 45 (2012) 1--9.

\bibitem{MR845785}
P.~H. Rabinowitz, \emph{Minimax methods in critical point theory with
  applications to differential equations}, \emph{CBMS Regional Conference
  Series in Mathematics}, vol.~65, Published for the Conference Board of the
  Mathematical Sciences, Washington, DC, 1986.

\bibitem{MR1162728}
P.~H. Rabinowitz, On a class of nonlinear {S}chr\"odinger equations, Z. Angew.
  Math. Phys. 43 (1992) 270--291.

\bibitem{MR3334963}
J.~A. Santos, S.~H.~M. Soares, Radial solutions of quasilinear equations in
  {O}rlicz-{S}obolev type spaces, J. Math. Anal. Appl. 428 (2015) 1035--1053.

\bibitem{MR2557725}
A.~Szulkin, T.~Weth, Ground state solutions for some indefinite variational
  problems, J. Funct. Anal. 257 (2009) 3802--3822.

\end{thebibliography}
\end{document}